\newtheorem{thm}{Theorem}[section]
\newtheorem{lem}[thm]{Lemma}
\newtheorem{prop}[thm]{Proposition}
\newtheorem{constr}[thm]{Construction}
\theoremstyle{definition}
\newtheorem{defn}[thm]{Definition}
\theoremstyle{remark}
\numberwithin{equation}{section}
\newcommand{\Z}{\mathbb{Z}}
\newcommand{\ol}{\overline}
\newcommand{\oDelta}{\overrightarrow{\Delta}}
\begin{document}
\normalsize
\date{}
\title{On the existence of unparalleled even cycle systems}
\author{
Peter Danziger
\thanks{Department of Mathematics, Ryerson University, Toronto, ON M5B 2K3, Canada}
\textsuperscript{,\hspace{-.1cm}}
\thanks{The author is supported by an NSERC Discovery grant. 
E-mail: danziger@ryerson.ca}
\and
Eric Mendelsohn
\footnotemark[1]
\textsuperscript{,\hspace{-.1cm}}
\thanks{E-mail: mendelso@math.utoronto.ca}
\and
Tommaso Traetta
\footnotemark[1]
\textsuperscript{,\hspace{-.1cm}}
\thanks{The author is supported by a fellowship of INdAM. 
E-mail: tommaso.traetta@ryerson.ca, traetta.tommaso@gmail.com}
}

\maketitle
\begin{abstract} 
\noindent A $2t$-cycle system of order $v$ 
is a set $\mathcal{C}$ of cycles whose edges partition the edge-set of $K_v-I$ (i.e., the complete graph minus the $1$-factor $I$). If $v\equiv 0 \pmod{2t}$, a set of $v/2t$ vertex-disjoint cycles of $\mathcal{C}$ 
is a parallel class. If $\mathcal{C}$  has no parallel classes, we call such a system {unparalleled}.

We show that there exists an unparalleled $2t$-cycle system of order $v \equiv 0 \pmod{2t}$ if and only if
$v>2t>2$. 
\end{abstract}

\noindent {\bf Keywords: cycle system, resolvability, parallel class free, $1$-rotational, $2$-pyramidal,
semiregular}
\eject

\section{Introduction}
A \emph{cycle system} of a simple graph $\Gamma$ is a set $\mathcal{C}$ of cycles of $\Gamma$ whose edge-sets 
partition the edge-set $E(\Gamma)$ of $\Gamma$. If $\Gamma$ contains only cycles of length $k$ 
(briefly, \emph{$k$-cycles}), we speak of a \emph{$k$-cycle system}. 
A set of cycles of $\mathcal{C}$ whose vertex-sets partition the vertex-set $V(\Gamma)$ of $\Gamma$ is 
called a \emph{parallel class} of $\mathcal{C}$. If the set $\mathcal{C}$ of cycles can be partitioned into parallel classes 
the system is called \emph{resolvable}. If $\mathcal{C}$ has no parallel classes, 
we call such a system \emph{unparalleled}.

We denote by $K_V$ and $K_v$ the \emph{complete graphs} with vertex-set $V$ and order $v$, respectively.
When the graph $\Gamma$ is $K_v$ or $K_v - I$ (i.e., $K_v$ minus the $1$-factor $I$) 
we refer to a $k$-cycle system of order $v$ and denote it with $CS(v,k)$. It is known that a $CS(v,k)$ exists if and only if $v\geq k\geq3$ and $v\lfloor\frac{v-1}{2} \rfloor \equiv 0 \pmod{k}$, \cite{AlGa01, Sa02}.

This paper deals with the existence of unparalleled $k$--cycle systems of order $v$. 
Since a $CS(v,k)$ contains a parallel class only if 
$v\equiv 0 \pmod{k}$, we will always assume that $v \equiv 0 \pmod{k}$, otherwise the existence of an unparalleled $CS(v,k)$ is straightforward. Also, note that a $CS(v,v)$ is always resolvable hence, we will 
assume that $v > k$.

Here, we solve the existence problem of an unparalleled even cycle system of order $v$.
More precisely, we show what follows:
\begin{thm}\label{main} 
There exists an unparalleled $CS(v,2t)$ with $v \equiv 0 \pmod{2t}$ if and only if
$v>2t>2$. 
\end{thm}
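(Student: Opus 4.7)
The plan is to dispense with necessity quickly. If $v=2t$ then every $2t$-cycle on the vertex set is Hamiltonian, so each cycle of a $CS(v,2t)$ is a parallel class by itself and the system is trivially resolvable; hence $v>2t$ is needed. The condition $2t>2$ is forced because cycles have length at least three. Thus only the sufficiency direction requires real work.

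For sufficiency, the plan is to produce, for each $v>2t\geq 4$ with $2t\mid v$, an explicit $CS(v,2t)$ admitting no parallel class. Following the keywords, I would exploit symmetry and construct systems invariant under a group $G$ acting (nearly) regularly on the vertex set, so the whole cycle set is generated from a small family of base cycles by translation. The natural choices are a $1$-rotational construction, with $|G|=v-1$ and one fixed vertex $\infty$, or a $2$-pyramidal construction, with $|G|=v-2$ and two fixed vertices $\infty_1,\infty_2$; the latter looks especially convenient when $v$ is even, since the removed $1$-factor $I$ can be taken to consist of the edge $\{\infty_1,\infty_2\}$ together with an involution orbit in $G$. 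Existence of the base cycles then reduces to a difference-family style calculation.

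The construction would proceed in stages: (i) verify the small cases, especially $v=4t$ and perhaps one or two further orders, by direct examples, since algebraic constructions often break down at the smallest admissible values; (ii) for general $v=2tm$ with $m$ large enough, exhibit base cycles and check that their $G$-orbits edge-partition $K_v-I$; (iii) design the base cycles so that the distinguished fixed vertices $\infty_1,\infty_2$ are ``entangled'' with the regular part in a rigid way, for instance by arranging that every cycle through $\infty_1$ also meets $\infty_2$ and a prescribed orbit of other vertices, so that after deleting any cycle through the fixed vertices, the residual graph fails to admit a covering by vertex-disjoint cycles from the system.

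The main obstacle I foresee is step (iii): proving that the constructed system is unparalleled. Being unparalleled is a global combinatorial condition that is not guaranteed by the edge-partition data alone. The $G$-action helps here, because any hypothetical parallel class can be translated and so the equivalence classes of potential parallel classes are finite in number and often few; but one must still carry out a careful case analysis, typically split according to which base cycle orbits contribute and to the roles played by $\infty_1,\infty_2$. The cleanest way to close the argument would be to extract a parity or counting obstruction localised at the two fixed vertices, ruling out simultaneous covering of $\infty_1$ and $\infty_2$ by disjoint cycles of the system.
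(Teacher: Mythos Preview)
Your necessity argument is fine and matches the paper. For sufficiency, however, your plan is a single direct $2$-pyramidal construction for every $v$, with the unparalleled property forced by an obstruction at the two fixed points. That is exactly what the paper does \emph{only} for $t=2$ (Theorem~\ref{four}, over the dihedral group $D_{4s-2}$, with precisely the kind of localised contradiction at $\infty,\infty'$ that you sketch in~(iii)). For $t\ge 3$ the paper takes a route you do not anticipate.

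The paper's approach for $t\ge 3$ is recursive rather than direct. It introduces the stronger notion of an \emph{intersecting} cycle system (any two cycles share a vertex), builds intersecting $CS(4t,2t)$ and $CS(6t,2t)$ via $2$-pyramidal constructions over $\Z_{v-2}$ (Section~4), and separately builds intersecting bipartite systems $CS(K_{w,w},2t)$ for $w=2t$ or $4t$ (Section~5). These ingredients feed into Construction~\ref{constr}: an unparalleled $CS(u+w,2t)$ is assembled from a small intersecting $CS(u,2t)$, an inductively obtained unparalleled $CS(w,2t)$, and isomorphic copies of an intersecting bipartite system glued between them. The absence of parallel classes then follows in two lines, because the intersecting pieces prevent two disjoint cycles from covering the ``anchor'' part $X$.

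This is a genuinely different architecture from yours. Your obstacle~(iii) is real: for a direct $2$-pyramidal $CS(v,2t)$ with $v/2t$ large, there are many orbit representatives and the case analysis of a hypothetical parallel class does not obviously reduce to a clean parity argument at $\infty_1,\infty_2$. The paper sidesteps this entirely by never attempting a direct construction beyond $v\in\{4t,6t\}$; instead, the intersecting property does all the heavy lifting, and the recursion carries it to arbitrary $v$. What your approach would buy, if it could be made to work, is a uniform closed-form description of the system for every $v$; what the paper's approach buys is a short, modular proof of unparalleledness that never requires analysing more than two base cycles at a time.
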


We point out that the equivalent problem concerning the existence of a resolvable $CS(v,k)$ has been completely settled in a series of papers
\cite{ASSW89, HoSc91, Lu92, RCW71, ReSt87} showing that:
\begin{itemize}
\item there exists a resolvable $CS(v,k)$ if and only if $v\equiv 0 \pmod{k}$. 
\end{itemize}
Resolvability in cycle systems has been widely studied, but
there are still open questions.
The \emph{Oberwolfach problem} and the 
\emph{Hamilton-Waterloo problem} are two well known examples of open problems about resolvable cycle systems. 
Some recent results on these topics can be found in \cite{BrDa11, BrSch09, BuDaTr, Tr13}. 
For the older ones, we refer the reader to \cite{BrRo07}. We also refer to \cite{BBRT14, BLT14, BuRiTr13}
for some recent progress on problems concerning the automorphisms of a resolvable cycle system. 
 
The existence problem of unparalleled $CS(v,k)$ is opposite 
to resolvability in cycle systems and very little is known. 
For $k=3$, a longstanding
conjecture, dating back to at least 1984, claims that $CS(v,3)$ without parallel classes exist whenever
$v\equiv 3 \pmod{6}$ and $v>9$, (see \cite{CoRo99}). 
The second author believes he was the first  to make this conjecture in conversations with K. Phelps, C.C. Lindner and A. Rosa circa 1984 but would welcome any earlier occurrence.
This conjecture is still open, but an important step forward has been recently taken in \cite{BrHo15} (see also \cite{BrHo13}). As far as we are aware, nothing is known when $k>3$. However, some work has been done on systems which are not resolvable. In particular, 
it is known that there exists a non-resolvable $CS(v,3)$ for all permissible $v\geq 9$, \cite{LiRees05}.

In Section 2 we describe the techniques used in this paper. We will need to deal with 
the \emph{complete bipartite graph} denoted either by 
$K_{X,Y}$ or $K_{r,s}$ whose \emph{parts}
are $X$ and $Y$ or have size $r$ and $s$, respectively.
Also, by 
$CS(K_{r,s}, k)$ we will denote a $k$-cycle system of $K_{r,s}$. 

In Section 3 we prove Theorem \ref{main} when $t=2$.
In Sections 4 and 5 we deal with what we call \emph{intersecting} cycle systems defined as follows:
\begin{enumerate}
 \item[-] a $CS(v,k)$ is  intersecting if any two of its cycles intersect in at least one vertex;
 \item[-] a $CS(K_{X,Y},2t)$ is  {intersecting} if any two of its cycles intersect in at least one vertex of $\mathbf X$.
\end{enumerate} 
Note that an intersecting cycle system is necessarily free from parallel classes. This stronger property is used in Section 6 to prove Theorem \ref{main} for $t>2$ by using a recursive construction (Construction \ref{constr}) that makes use of an intersecting CS$(v,2t)$ for $v=4t$ or $6t$ (Section \ref{intersecting1}) for the base case and isomorphic copies of an intersecting $CS(K_{w,w}, 2t)$ for $w=2t$ or $4t$ (Section \ref{intersecting2}).


\section{Some preliminaries}
We collect here the necessary preliminary notation and definitions, and describe the techniques used in Sections
3, 4, and 5. We refer the reader to \cite{Sc87} for the basic concepts on group theory.

Let $v,t$ be positive integers with $v \equiv 0 \pmod{2t}$, let $G$ be an abstract group of order $v-2$ acting by right translation on the
set $\ol{G}=G\ \cup \ \{\infty, \infty'\}$ where $\infty$ and $\infty'$ are distinct elements not belonging to $G$ and fixed by the action. Given a cycle $C$ in $K_{\ol{G}}$ and an element $g \in G$, we denote by
$C+g$ the \emph{translate of $C$ by $g$}, namely, the cycle obtained from $C$ by replacing each vertex $x\in V(C)\setminus\{\infty,\infty'\}$ with $x+g$. 
Also, the \emph{list of differences} of $C$ is the 
multiset $\Delta C$ of all possible differences $x - y$ (or quotients $xy^{-1}$) between
two adjacent vertices $x$ and $y$ of $C$ with $x,y \not\in \{\infty, \infty'\}$. 
In other words, if $C=(x_1, x_2, \ldots, x_{\ell})$ and $G$ is an additive group, then
\[\Delta C = \{\pm(x_i - x_{i+1}) \;|\; x_i, x_{i+1}\not\in\{\infty, \infty'\}, i=1,2, \ldots, \ell\},\]
where $x_{\ell+1}=x_1$. Given a family $\mathcal{S}$ of cycles we denote the list of differences of $\mathcal{S}$ by 
$\Delta \mathcal{S} = \cup_{C\in \mathcal{S}} \Delta C$.

Given a $CS(v,2t)$ $\mathcal{C}$, we say that $\mathcal{C}$ is \emph{$2$-pyramidal} over $G$ if $K_v=K_{\ol{G}}$ 
and the translation by any element of $G$ preserves the cycle--set $\mathcal{C}$, that is, 
for any cycle $C \in \mathcal{C}$ and any $g \in G$, we have that $C+g\in \mathcal{C}$.
Note that the group of right translations induced by $G$ is an automorphism group of $\mathcal{C}$ fixing two
vertices, $\infty$ and $\infty'$, and acting sharply transitively on the others. 
Cycle systems of this type, and more generally,  $k$-pyramidal cycle systems over an abstract group $G$ have been studied, for example, in \cite{BoMaRi09, BuTr12, BBRT14, BuTr15}. A $1$--pyramidal cycle system is usually called \emph{$1$--rotational} and recent existence results can be found in \cite{BoBuRiTr12,BuRi08, BuRiTr13, Ol05, Tr10, Tr13}.

Given a cycle $C$ of $K_{\ol{G}}$, we denote by $Orb(C)=\{C+g \;|\;g\in G\}$ the {$G$-orbit} of $C$, namely, the set
of all distinct translates of $C$. Of course, $Orb(C)$ has size at most $|G|=v-2$.  

We say that $C$ is a \emph{short-orbit-cycle} if  $C$ is fixed by a non-zero element of $G$, that is, $C+g=C$ for some 
$g\in G\setminus\{0\}$. Otherwise, $C$ is called a \emph{long-orbit-cycle}. It is not difficult to check that the 
$G$-orbit of a short-orbit-cycle has size at most $(v-2)/2$, whereas the $G$-orbit of a long-orbit-cycle has size $v-2$. Finally, we recall that an element of order $2$ of $G$ is sometimes called an \emph{involution}.

\begin{defn}\label{starter} Let $v \equiv 0 \pmod{2t}$ and let $G$ be a group of order $v-2$.
A set $\mathcal{S}$ of $2t$-cycles of $K_{\ol{G}}$ containing $\ell$ long-orbit-cycles and $s$ short-orbit-cycles is called
a \emph{$(K_{\ol{G}}, 2t)$-starter system} if the following conditions are satisfied:
\begin{enumerate}
  \item there is exactly one cycle $C\in\mathcal{S}$ (resp., $C'\in\mathcal{S}$) containing $\infty$ 
  (resp., $\infty'$)  and $C$ is a short-orbit-cycle;
  \item $\Delta \mathcal{S}\supseteq G\setminus\{0,\lambda\}$ where $\lambda$ is an involution of $G$;  
  \item $s+2\ell \leq v/2t$;
\end{enumerate} 
\end{defn}

The following Lemma shows that any $(K_{\ol{G}}, 2t)$-starter system yields a $2$-pyramidal $CS(v,2t)$ over $G$.

\begin{lem}\label{2pyramidal} Let $v \equiv 0 \pmod{2t}$ and let $G$ be a group of order $v-2$.
If there exists a $(K_{\ol{G}}, 2t)$-starter system $\mathcal{S}$, then
the set $\cup_{C\in \mathcal{S}} Orb(C)$ is
a $2$-pyramidal $CS(v,2t)$ over $G$.
\end{lem}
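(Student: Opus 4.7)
The plan is to establish two facts: (a) the set $\mathcal{C} := \bigcup_{C\in\mathcal{S}} Orb(C)$ is preserved under the translation action of $G$, and (b) the edges of the cycles in $\mathcal{C}$ partition $E(K_{\ol G} - I)$. Fact (a) is immediate from the definition of orbit: for any $g\in G$ and any $C+h\in Orb(C)$, one has $(C+h)+g = C+(h+g) \in Orb(C)$, so translation permutes each orbit and hence preserves $\mathcal{C}$. This yields the $2$-pyramidal property, provided $\mathcal{C}$ is shown to be a cycle system.

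For fact (b), the strategy is to combine a covering step with a counting step. On the covering side, I would split the edges of $K_{\ol G} - I$ into three classes. (i) For edges $\{\infty, g\}$ with $g\in G$, the unique starter cycle $C_0$ containing $\infty$ has exactly two edges at $\infty$, say $\{\infty,a\}$ and $\{\infty,b\}$; then for every $x\in G$ the translate $C_0+(x-a)$ covers $\{\infty,x\}$. (ii) Edges $\{\infty',g\}$: symmetric, using the unique starter cycle $C_0'$ containing $\infty'$. (iii) For internal edges $\{g,g+d\}$ with $d\in G\setminus\{0,\lambda\}$, Condition 2 gives $d\in \Delta\mathcal{S}$, so some starter cycle has an edge of difference $\pm d$, whose $G$-translates cover every edge of $K_G$ of difference $d$. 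On the counting side, every short-orbit cycle $C$ satisfies $|Orb(C)|\le (v-2)/2$ while every long-orbit cycle has $|Orb(C)|=v-2$, so Condition 3 forces
\[
|\mathcal{C}| \;\le\; \ell(v-2) + s\cdot\frac{v-2}{2} \;=\; \frac{(v-2)(2\ell+s)}{2} \;\le\; \frac{v(v-2)}{4t}.
\]
Multiplying by the cycle length $2t$, the total edge-count of $\mathcal{C}$, taken with multiplicity, is at most $\frac{v(v-2)}{2} = |E(K_{\ol G}-I)|$.

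Combining the two sides, every edge of $K_{\ol G}-I$ is covered at least once while the total edge multiplicity in $\mathcal{C}$ does not exceed $|E(K_{\ol G}-I)|$; hence each such edge is covered exactly once and no edge of $I$ is touched, proving that $\mathcal{C}$ is a $CS(v,2t)$ and, together with fact (a), that it is $2$-pyramidal over $G$. I expect the only delicate step to be the coverage of edges incident to $\infty$ and $\infty'$: the uniqueness clause in Condition 1, combined with the fixed-point property of these two vertices, guarantees that the only cycles of $\mathcal{C}$ touching $\infty$ lie in $Orb(C_0)$, which is what forces the orbit of $C_0$ to carry every edge at $\infty$. Once this observation is in place, the inequality coming from Condition 3 seals the partition automatically via the pigeonhole principle, and simultaneously rules out that any edge of $I$ (the edge $\{\infty,\infty'\}$ or an edge of difference $\lambda$) is covered.
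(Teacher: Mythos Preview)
Your proof is correct and follows essentially the same approach as the paper's: a covering argument for the three edge types (edges at $\infty$, edges at $\infty'$, and internal edges of difference $d\neq 0,\lambda$) combined with the orbit-size bound from Condition~3 to force exact coverage via counting. The only cosmetic differences are that the paper first checks separately that no short-orbit starter cycle contains the edge $\{\infty,\infty'\}$ (which you correctly observe is subsumed by the counting step), and that you explicitly verify the $G$-invariance of $\mathcal{C}$, which the paper leaves implicit.
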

\begin{proof} Let  $\mathcal{S}= \{C_1, C_2, \ldots, C_{s+\ell}\}$ be a {$(K_{\ol{G}}, 2t)$-starter system} and assume, 
without loss of generality, that $C_1, \ldots, C_s$ are the $s$ short-orbit-cycles of $\mathcal{S}$. 

We first show that no short-orbit-cycle of $\mathcal{S}$ contains the edge $\{\infty, \infty'\}$. By contradiction,
assume that there is a short-orbit-cycle of $\mathcal{S}$, say $C_1$, where  $\infty$ and $\infty'$ are adjacent, hence, $C_1=(\infty, \infty', x,\ldots)$. Since $C_1$ has a short $G$-orbit, there is $g\in G\setminus\{0\}$ such that
$C_1+g=(\infty, \infty', x+g,\ldots)=C$. It follows that $x+g=x$, that is, $g=0$ which is a contradiction.

We now use properties \ref{starter}.(1)-(2) to show that $\mathcal{C}$ contains all the edges of the form
$\{\infty, x\}, \{\infty', x\},$ and $\{x,g+x\}$ for any $x \in G$ and 
for any $g\in G\setminus\{0, \lambda\}$. Assume that $\infty$ lies in $C_i$, hence $C_i=(\infty, a,\ldots)$. Then, $\{\infty,x\}$ 
is an edge of $C_i + (-a+x)$ for any $x \in G$. A similar reasoning shows that the edge $\{\infty',x\}$ lies in some cycle of $\mathcal{C}$ for any $x \in G$. 
Now, let $x\in G$, $g \in G\setminus\{0,\lambda\}$ and let $C_j$ be the cycle of $\mathcal{S}$ such that
$g \in \Delta C_j$; therefore, $C_j = (y, g+y, \ldots)$ for some $y\in G$. It follows that
$C_j + (-y+x)$ contains the edge $\{x, g+x\}$.

We have therefore proven that the cycle-set $\mathcal{C}=\cup_{i=1}^{\ell+s} Orb(C_i)$ covers (at least)
all the edges of $K_{\ol{G}}-I$ where $I=\{\{x,\lambda+x\}\;|\;x\in G\}\ \cup \ \{\{\infty, \infty'\}\}$ is a $1$-factor of $K_{\ol{G}}$.
Note that the size of $K_{\ol{G}}-I$ is $v(v-2)/2$. Therefore, to prove that $\mathcal{C}$ is a cycle system of
$K_{\ol{G}}-I$, we only need to show that $\mathcal{C}$ covers at most $v(v-2)/2$ edges. Denote by $E$ the number of edges of $K_{\ol{G}}$ contained in some cycle of $\mathcal{C}$, then 
\[
E \leq \sum_{i=1}^{\ell+s} |E(C_i)||Orb(C_i)| = 2t \cdot \sum_{i=1}^{\ell+s} |Orb(C_i)|.  
\]
Now, recall that either $|Orb(C_i)|\leq (v-2)/2$ or $|Orb(C_i)| =  v-2$ according to whether $C_i$ is a short-orbit or a long-orbit-cycle, therefore,
\[
  E \leq 2t\cdot\sum_{i=1}^{\ell+s} |Orb(C_i)| \leq 
  2t\left(s\frac{v-2}{2} + \ell (v-2)\right) = 
  2t(s+2\ell)\left(\frac{v-2}{2}\right).
\]
Finally, by property \ref{starter}.(3), we have that $s+2\ell\leq v/2t$, hence $E\leq v(v-2)/2$ and this completes the proof.
\end{proof}

We now deal with $2t$-cycle systems of the complete bipartite graph with parts of the same size.
The techniques described below are used in Section 5 to construct intersecting 
$2t$-cycle systems of some complete bipartite graphs. We point out that although 
the existence of a $CS(K_{r,s}, k)$ is completely solved in \cite{So81}, the related constructions
do not yield intersecting systems.

Let $w \geq t$ be positive integers with $w^2 \equiv 0 \pmod{2t}$ so that the obvious necessary conditions for the existence of a $CS(K_{w,w},2t)$ are satisfied. Also,  denote by $\Z_n$ the cyclic group of order $n$. 
From now on, we let $K_{w,w}$ be the complete bipartite graph with vertex-set $\Z_{w}\times \Z_2$ and parts $X=\Z_{w}\times \{0\}$
and $Y=\Z_{w}\times \{1\}$. 

Given a cycle $C$ of $K_{w,w}$, the \emph{list of oriented differences of $C$}
is the multiset $\oDelta C$ containing the differences $g-h$ between the adjacent vertices $(g,1)$ and $(h,0)$ of $C$, namely,
\[
  \oDelta C= \{g-h \;|\; \{(g,1), (h,0)\}\in E(C).\}
\]

Given a set $\mathcal{S}$ of cycles of $K_{w,w}$, we denote by 
$\oDelta \mathcal{S} = \cup_{C\in\mathcal{S}} \oDelta C$ the list of oriented differences of $\mathcal{S}$.
Also, for any $g \in \Z_{w}$ we denote by $C+g$ the cycle obtained by replacing each vertex of $C$, say $(z,i)$,
with $(z+g,i)$. Finally, let $Orb(C)=\{C+g\;|\; g\in\Z_{w}\}$ be the orbit of $C$ over $\Z_{w}$.

Given a $CS(K_{w,w}, 2t)$ $\mathcal{C}$, we say that $\mathcal{C}$ is \emph{semiregular} over $\Z_{w}$ if 
the translation by any element of $\Z_{w}$ preserves the cycle--set $\mathcal{C}$, that is, 
for any cycle $C \in \mathcal{C}$ and any $g \in G$, we have that $C+g\in \mathcal{C}$.
As before, the group of right translations induced by $\Z_{w}$ on  $\Z_{w}\times \Z_2$ 
is an automorphism group of $\mathcal{C}$ acting semiregularly  on the vertex-set.

We now define a slightly different starter system that allows us to construct semiregular $2t$-cycle systems
of $K_{w,w}$.

\begin{defn}\label{starter2} Let $w \geq t$ be an integer such that $w^2 \equiv 0 \pmod{2t}$.
A set $\mathcal{S}$ of $2t$-cycles $K_{w,w}$  is called
a \emph{$(K_{w,w}, 2t)$-starter system} if $\oDelta \mathcal{S}$ covers the elements in 
$\Z_{w}$ exactly once, i.e., $\oDelta \mathcal{S} = \Z_{w}$.
\end{defn}

The following Lemma shows that any $(K_{w,w},2t)$-starter system yields a semiregular 
$CS(K_{w,w},2t)$ over $\Z_{w}$.

\begin{lem}\label{semiregular} Given a $(K_{w,w},2t)$-starter system $\mathcal{S}$, 
the set $\mathcal{C}=\cup_{C\in \mathcal{S}} Orb(C)$ is
a semiregular $CS(K_{w,w},2t)$ over $\Z_{w}$.
\end{lem}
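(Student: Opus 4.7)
The plan is to mirror the proof of Lemma \ref{2pyramidal}, exploiting the fact that the sharp starter condition $\oDelta\mathcal{S}=\Z_{w}$ makes the bipartite argument cleaner. First I would observe that semiregularity of $\mathcal{C}$ over $\Z_{w}$ is automatic from the construction: $\mathcal{C}$ is a union of $\Z_{w}$-orbits, and translation by $g\in\Z_{w}$ acts on $\Z_{w}\times\Z_2$ via $(z,i)\mapsto(z+g,i)$, which has no fixed points for $g\neq 0$, so the action on the vertex-set is free (and fixes each of the two parts setwise). The real content of the Lemma is to verify that the cycles in $\mathcal{C}$ partition the edge-set of $K_{w,w}$.

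For edge coverage, I would pick any edge $\{(h,0),(g,1)\}\in E(K_{w,w})$ and set $d=g-h\in\Z_{w}$. Since $\oDelta\mathcal{S}=\Z_{w}$, some cycle $C\in\mathcal{S}$ contains an edge $\{(b,0),(a,1)\}$ with $a-b=d$. Then the translate $C+(h-b)\in\mathcal{C}$ contains the edge $\{(h,0),(g,1)\}$, since $b+(h-b)=h$ and $a+(h-b)=d+h=g$.

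To rule out double coverage, I would close with a counting bound. The starter equality of multisets $\oDelta\mathcal{S}=\Z_{w}$ gives $|\mathcal{S}|\cdot 2t = w$. Since every $\Z_{w}$-orbit has size at most $w$, the total number of edges covered by $\mathcal{C}$, counted with multiplicity, satisfies
\[
\sum_{D\in\mathcal{C}}|E(D)| \;=\; 2t\sum_{C\in\mathcal{S}}|Orb(C)| \;\leq\; 2t\cdot|\mathcal{S}|\cdot w \;=\; w^2 \;=\; |E(K_{w,w})|.
\]
Combined with the coverage step, this forces each edge of $K_{w,w}$ to lie in exactly one cycle of $\mathcal{C}$, which completes the proof. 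I do not expect a real obstacle here: the bipartite starter definition is tailored so that the presence of short $\Z_w$-orbits of cycles causes no bookkeeping issues (unlike in Lemma \ref{2pyramidal}, where short and long orbits had to be treated separately), and the whole argument reduces to a routine verification.
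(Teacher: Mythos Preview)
Your proof is correct; the route differs slightly from the paper's. You establish edge coverage exactly as the paper does, but for the ``no double coverage'' step the paper argues directly: if an edge $\{(z,1),(z',0)\}$ lay in two distinct cycles $C_i+g_i$ of $\mathcal{C}$ (with $C_i\in\mathcal{S}$), then the difference $z-z'$ would occur with multiplicity at least $2$ in $\oDelta\mathcal{S}$, contradicting the equality $\oDelta\mathcal{S}=\Z_w$. You instead mimic Lemma~\ref{2pyramidal} and close with a global edge count. Both work; the paper's version exploits that in the bipartite setting the starter condition is an exact equality of multisets, so the ``at most once'' half is immediate without any counting, while your approach has the virtue of being uniform with the $2$-pyramidal case.

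One small remark on your display: the first equality $\sum_{D\in\mathcal{C}}|E(D)|=2t\sum_{C\in\mathcal{S}}|Orb(C)|$ tacitly assumes that the orbits $Orb(C)$ for $C\in\mathcal{S}$ are pairwise disjoint. This does follow from $\oDelta\mathcal{S}=\Z_w$ (two cycles in the same $\Z_w$-orbit have identical lists of oriented differences, so distinct starter cycles lie in distinct orbits), but for your bound the inequality $\leq$ already suffices there and needs no justification.
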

\begin{proof} We first show that an edge of $K_{w,w}$ lies 
in at most one cycle of $\mathcal{C}$. By contradiction, assume that the edge 
$\{(z,1),(z',0)\}$ of $K_{w,w}$ is contained in two distinct cycles of $\mathcal{C}$, say $C_i+g_i$ 
with $C_i \in \mathcal{S}$, $g_i \in \Z_w$, and $i=1,2$. If $C_1 \neq C_2$, 
then $z-z' \in \oDelta(C_i+g_i)=\oDelta C_i$ for $i=1,2$ hence, $z-z'$ has multiplicity at least $2$ in
$\oDelta \mathcal{S}$. If $C_1 = C_2$, then $g_1\neq g_2$ and the edges 
$\{(z-g_i,1),(z'-g_i,0)\}$ for $i=1,2$ are distinct edges of $C_1$ which give rise to the same difference $z-z'$; therefore,  
$z-z'$ has multiplicity at least $2$ in $\oDelta \mathcal{S}$. In both cases, $\oDelta \mathcal{S}$ has repeated elements
contradicting our assumption.

It is left to show that any edge of $\{(z,1),(z',0)\}$ of $K_{w,w}$ is contained in a cycle of $\mathcal{C}$.
Since $z-z' \in \oDelta \mathcal{S}$, there is a cycle $C$ of $\mathcal{S}$ such that $z-z' \in \oDelta C$, hence
$C=((g,1),(g',0), \ldots,)$ where $g-g' = z-z'$.   
Since $C + (-g'+z') = ((z,1), (z',0), \ldots)$, we have that 
$\{(z,1),(z',0)\}$ is contained in a cycle of $\mathcal{C}$ and this completes the proof.
\end{proof}

We end this section with a lemma that we will use in Sections 4 and 5 to check the intersecting property 
for a $2$-pyramidal or a semiregular cycle system.
First, given two sets $V_1$ and $V_2$ of $\Z_{n}$, we denote by $V_1-V_2$ the set defined as follows: 
$V_1-V_2 = \{v_1-v_2 \;|\; (v_1,v_2)\in V_1\times V_2\}$.

\begin{lem} \label{X1-X2} Given two subsets $V_1$ and $V_2$  of $\Z_w$, we have that
 $(V_1+g_1) \ \cap \ (V_2+g_2)\neq \emptyset$ for any $g_1, g_2 \in\Z_v$ if and only if 
$V_1-V_2 \supseteq \Z_w$.
\end{lem}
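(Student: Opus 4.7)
The plan is to directly unpack the definition of non-empty intersection for two translates and observe that whether they meet is controlled by the single element $g_2 - g_1$.

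First, I would note the basic equivalence: $(V_1+g_1) \cap (V_2+g_2) \neq \emptyset$ if and only if there exist $v_1 \in V_1$ and $v_2 \in V_2$ with $v_1 + g_1 = v_2 + g_2$, which rearranges to $v_1 - v_2 = g_2 - g_1$. Thus the two translates intersect precisely when $g_2 - g_1 \in V_1 - V_2$.

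Next, I would observe that as $g_1$ and $g_2$ range independently over $\Z_w$, the difference $g_2 - g_1$ attains every value $d \in \Z_w$ (indeed, one may always take $g_1 = 0$ and $g_2 = d$). Combining this with the previous step, the condition that $(V_1+g_1) \cap (V_2+g_2) \neq \emptyset$ holds for all $g_1, g_2 \in \Z_w$ is equivalent to the condition that every element of $\Z_w$ lies in $V_1 - V_2$, i.e., $V_1 - V_2 \supseteq \Z_w$. (Of course the reverse inclusion is automatic, so this is in fact an equality, but only the stated inclusion is needed for the lemma.)

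There is no real obstacle here; the only care required is to track the signs in the correspondence $v_1 - v_2 \leftrightarrow g_2 - g_1$, and to note that the universal quantification over $g_1, g_2$ is exactly what forces the entire group $\Z_w$ to be covered by the difference set.
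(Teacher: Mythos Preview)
Your proof is correct and follows essentially the same approach as the paper: both arguments reduce the intersection condition on $(V_1+g_1)\cap(V_2+g_2)$ to a condition depending only on the single difference $g_2-g_1$, and then observe that this difference lies in $V_1-V_2$ if and only if the intersection is non-empty. The only cosmetic difference is that the paper first translates by $-g_1$ to reduce to the case $V_1\cap(V_2+g)$, whereas you go directly to the equation $v_1+g_1=v_2+g_2$; the content is identical.
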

\begin{proof} We first note that $(V_1+g_1) \ \cap \ (V_2+g_2)$ is non-empty if and only if 
$V_1\ \cap \ (V_2+g_2-g_1)$ is non-empty. Therefore, it is enough to show that 
$V_1\ \cap \ (V_2+g)\neq \emptyset$ if and only if  $g\in V_1-V_2$.

If $g \in V_1-V_2$, then there is a pair $(x_1, x_2) \in V_1 \times V_2$ such that $x_1 - x_2 = g$; hence,
$x_1 = x_2+g \in V_1\ \cap \ (V_2+g)\neq \emptyset$. Conversely, if 
$V_1\ \cap \ (V_2+g)\neq \emptyset$, then there is a pair $(x_1, x_2) \in V_1 \times V_2$ such that 
$x_1 = x_2+g$; hence, $g=x_1-x_2 \in V_1-V_2$.
\end{proof}


\section{Unparalleled $CS(v,4)$}\label{intersecting1}
In this section we show the existence of unparalleled $4$-cycle systems. 
We slightly modify a construction of \cite{BuRi08} which allows the authors to characterize 
$1$-rotational $2$-factorizations of the complete graph under the dihedral group. As pointed out 
in \cite{BuTr15},
any $1$-rotational $2$-factorization of $K_v$ yields a $2$-pyramidal $2$-factorization of $K_{v+1}-I$.
Our slightly different construction will lead to an unparalleled $CS(v,4)$ which is $2$-pyramidal under the dihedral group, for any $v \equiv 0 \pmod{4}$. 

We denote by $D_{4s-2}$ the dihedral group of order $4s-2$, i.e., the group with defining relations
\[
  D_{4s-2} = \langle x,\lambda\;|\; x^{2s-1} = \lambda^2 = 1; \lambda x = x^{-1}\lambda\rangle.
\]
Note that $D_{4s-2}$ is a multiplicative group, hence we denote by $1$ its unit. 
Also, we denote by $\langle x \rangle=\{x^0=1, x, x^2, \ldots, x^{2s-2}\}$ the subgroup of $D_{4s-2}$ generated by $x$. We collect here some known properties of $D_{4s-2}$ (see, for example, \cite{Sc87}):
\begin{enumerate}
\item[D1.] $D_{4s-2} = \{1, x^j\lambda\}\cdot \langle x \rangle$ for any integer $j$;
\item[D2.] $\langle x \rangle\setminus\{1\} = \{x^{\pm 2i} \;|\; i=1,2\ldots,s-1\}$;
\item[D3.] $(x^{j}\lambda)^2=1$ and $x^{j}\lambda = \lambda x^{-j}$  for any integer $j$.
\end{enumerate}

\begin{thm}\label{four}
There exists an unparalleled $CS(4s,4)$ for any $s>1$.
\end{thm}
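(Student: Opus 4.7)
The plan is to apply the $2$-pyramidal machinery of Section~2. The idea is to work in the dihedral group $G = D_{4s-2}$ acting on $\ol{G} = G \cup \{\infty, \infty'\}$ and construct a $(K_{\ol{G}}, 4)$-starter system $\mathcal{S}$; Lemma~\ref{2pyramidal} then produces a $2$-pyramidal $CS(4s, 4)$, and unparalleledness can be checked directly from the structure of $\mathcal{S}$ and $G$.

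The cycle absorbing the infinity points will be $C_0 = (\infty, 1, \infty', \lambda)$. By property D3, $\lambda^2 = 1$, so $C_0 \cdot \lambda = (\infty, \lambda, \infty', 1) = C_0$ as unordered cyclic sequences, showing $C_0$ is a short-orbit cycle fixed by $\lambda$ with orbit length $(4s-2)/2 = 2s - 1$. Moreover, every translate of $C_0$ contains \emph{both} $\infty$ and $\infty'$, so $C_0$ is simultaneously the unique cycle of $\mathcal{S}$ through $\infty$ and through $\infty'$, fulfilling condition~\ref{starter}.(1). Note $\Delta C_0 = \emptyset$; all nontrivial non-$\lambda$ differences must therefore be supplied by base cycles lying entirely in $G$.

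Next, I would adjoin $4$-cycles inside $G$ to cover $\Delta \mathcal{S} = G \setminus \{1, \lambda\}$. Two natural families are available: short-orbit cycles of the form $(x^i, x^j, x^i\lambda, x^j\lambda)$ stabilized by $\lambda$, whose edges yield (using D3) the opposite rotations $x^{\pm(i-j)}$ and the reflection $x^{i+j}\lambda$; and long-orbit cycles $(x^{i_1}\lambda, x^{j_1}, x^{i_2}\lambda, x^{j_2})$, whose four rotation--reflection edges contribute four reflections. Following the spirit of the construction of \cite{BuRi08}, I would tabulate the exponents so that every rotation in $\langle x \rangle \setminus \{1\}$ and every reflection in $\lambda \langle x \rangle \setminus \{\lambda\}$ appears exactly once in $\Delta\mathcal{S}$, while keeping $\sigma + 2\ell \leq s$ as required by Definition~\ref{starter}.(3). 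A split into a couple of subcases modulo a small integer to make this budget tight is a finite computation using D1--D3.

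Finally, unparalleledness would be argued by contradiction. Suppose $\mathcal{P}$ is a parallel class of the resulting $CS(4s,4)$. The only cycles of $\mathcal{C}$ passing through $\infty$ are the $2s-1$ translates of $C_0$, so exactly one translate $C_0 \cdot u = (\infty, u, \infty', \lambda u)$ lies in $\mathcal{P}$, and it simultaneously absorbs $\infty'$. The remaining $s-1$ cycles of $\mathcal{P}$ must partition the $4s-4$ vertices of $G \setminus \{u, \lambda u\}$. The main obstacle is to show this is impossible for every $u \in G$: the base cycles $C_1, \ldots, C_m$ should be chosen so that no family of $s-1$ translates tiles $G \setminus \{u, \lambda u\}$. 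I expect the obstruction to come from a parity or coset invariant on the decomposition $G = \langle x \rangle \sqcup \lambda\langle x \rangle$ (property D1), arranged so that the base cycles have a uniform but unbalanced distribution of rotations and reflections that is preserved by translation yet incompatible with the forced $\{u, \lambda u\}$ removal. Identifying and verifying this algebraic invariant is the delicate core of the argument.
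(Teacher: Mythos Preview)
Your framework is exactly the one the paper uses: a $2$-pyramidal $CS(4s,4)$ over $D_{4s-2}$ built from a starter system, with unparalleledness proved via the coset decomposition $D_{4s-2}=\langle x\rangle\sqcup\lambda\langle x\rangle$. However, two of the three pieces you defer are precisely the content of the proof, and your one concrete choice is the wrong one.

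First, the base cycle through the infinities. You take $C_0=(\infty,1,\infty',\lambda)$; the paper takes $C_0=(\infty,1,\infty',x\lambda)$. This is not cosmetic. With your $C_0$, the two $G$-vertices of every translate $C_0\cdot a$ (for $a\in\langle x\rangle$) are $\{a,\lambda a\}$, which is invariant under left multiplication by $\lambda$. If the remaining base cycles are the natural $\lambda$-stable ones (your short-orbit family, or equivalently the paper's $C_i=(x^i,x^{-i},\lambda x^i,\lambda x^{-i})$), then every cycle in the system has vertex-set of the form $S\cup\lambda S$ with $S\subset\langle x\rangle$. In that case the coset invariant you are hoping for gives no obstruction at all, and in fact the system \emph{does} have parallel classes: already for $s=2$ the pair $\{(\infty,1,\infty',\lambda),\,(x,x^{2},\lambda x,\lambda x^{2})\}$ is one. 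The paper's whole trick is to break this symmetry by putting $x\lambda$ (not $\lambda$) in $C_0$, so that the $\langle x\rangle$-part of $C_0\cdot a$ is $\{a\}$ while the $\lambda\langle x\rangle$-part is $\lambda\cdot\{x^{-1}a\}$, not $\lambda\cdot\{a\}$. The contradiction then drops out in one line: summing over a putative parallel class forces $x\lambda a_{01}=\lambda a_{01}$, i.e.\ $x=1$.

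Second, the paper does not need any case split or mixture of long and short orbits. It uses exactly the $s-1$ short-orbit cycles $C_i=(x^i,x^{-i},\lambda x^i,\lambda x^{-i})$ for $1\le i\le s-1$; each contributes four distinct differences $\{1,\lambda\}\cdot x^{\pm 2i}$, and by D2 these sweep out $D_{4s-2}\setminus\{1,\lambda\}$ exactly. So the ``finite computation'' you postpone is in fact a one-line choice, and the budget $s+2\ell=s$ is met with $\ell=0$. Your generic short-orbit cycle $(x^i,x^j,x^i\lambda,x^j\lambda)$ yields only one reflection $x^{i+j}\lambda$ (with multiplicity two), so $s-1$ of them cannot cover the $2s-2$ needed reflections; the paper's specific pairing $j=-i$ is what makes the count work.
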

\begin{proof} Let $\mathcal{S}=\{C_0, C_1, \ldots, C_{s-1}\}$ be a set of $s$
$4$-cycles of $K_{\ol{D_{4s-2}}}$ defined as follows:
\begin{align*}
  & C_0 = (\infty, 1, \infty', x\lambda) \;\;\;\text{and}\;\;\; 
  C_i = (x^{i}, x^{-i}, \lambda x^{i}, \lambda x^{-i}) \;\;\;\text{for}\;\;\; i=1,\ldots, s-1.
\end{align*}
Note that each of these cycles has a short $G$-orbit: in fact, by Property D3, we have that
\begin{equation}\label{4cycles}
\begin{aligned}
  & C_0\cdot x\lambda = (\infty, x\lambda, \infty', (x\lambda)^2=1) = C_0, \\ 
  & C_i\cdot \lambda = (x^{i}\lambda, x^{-i}\lambda, \lambda x^{i}\lambda, \lambda x^{-i}\lambda) = 
(\lambda x^{-i}, \lambda x^i, x^{-i}, x^i) = C_i, 
\end{aligned}
\end{equation}
for any $i=1,\ldots, s-1$.
Also, $\Delta C_i \supseteq 
\{x^{\pm 2i}, \lambda x^{2i}, \lambda x^{-2i}\} = \{1,\lambda\} \cdot x^{\pm2i}$
for any $i=1,\ldots, s-1$. Therefore, by Properties D1 and D2, it follows that
\[
\Delta \mathcal{S} = \bigcup_{i=1}^{s-1} \Delta C_i \supseteq 
\{1,\lambda\}\cdot(\langle x\rangle\setminus\{1\}) = D_{2n}\setminus\{1, \lambda\}.\]
Thus, we have proven that $\mathcal{S}$ is a $(D_{4s-2},4)$-starter system and, 
by Lemma \ref{2pyramidal}, we have that $\mathcal{C}=\cup_{C\in \mathcal{S}} Orb(C)$ is
a $2$-pyramidal $CS(4s,4)$ over $D_{4s-2}$.

It is left to show that $\mathcal{C}$ is free from parallel classes. 
By contradiction, assume that
$\mathcal{C}$ has a parallel class $\mathcal{P}$. We consider $\mathcal{P}$ as a $2$-factor of 
$K_{\ol{D_{4s-2}}}$ hence, its vertex-set is  $V(\mathcal{P})=D_{4s-2}\ \cup \ \{\infty, \infty'\}$.
We note that by \eqref{4cycles} it follows that $C_0\cdot x\lambda a=C_0\cdot a$
and $C_i\cdot \lambda a= C_i \cdot a$ for any $i\geq 1$ and $a\in\langle x\rangle$. 
Hence, by Property D1, we have that any translate of $C_i$ is of the form $C_i\cdot a$ 
for some $a \in \langle x \rangle$. So,
$Orb(C_i) = \{C_i\cdot a \;|\; a\in\langle x \rangle\}$ for any $i\geq 0$.

Since $\mathcal{P}$ contains 
translates of some cycles of $\mathcal{S}$, we denote by 
$\{a_{i1},a_{i2}, \ldots, a_{i\ell_i}\}$ 
the (possibly empty) set of all elements $a_{ij}\in\langle x \rangle$ such that $C_i \cdot a_{ij}\in \mathcal{P}$; hence
$\mathcal{P} = \cup_{i=0}^{s-1} \cup _{j=1}^{\ell_i} C_i \cdot a_{ij}$. 
Note that all cycles in $Orb(C_0)$ contains $\infty$ and 
$\infty'$, hence $\mathcal{P}$ contains only one translate of $C_0$, i.e., $\ell_0=1$. 
Therefore,
\begin{align*}
  & V(\mathcal{P}) = \bigcup_{i=0}^{s-1} \bigcup_{j=1}^{\ell_i} 
  V(C_i \cdot a_{ij}) = \{\infty, \infty', a_{01},x\lambda a_{01}\}\ \cup\ U \ \cup\ W, \\
  & \text{where} \;\;U= \bigcup_{i=1}^{s-1} \bigcup_{j=1}^{\ell_i} \{x^{i}, x^{-i}\}\cdot a_{ij} \;\;\;\text{and}\;\;\;
  W = \bigcup_{i=1}^{s-1} \bigcup_{j=1}^{\ell_i} \{\lambda x^{i},\lambda x^{-i}\}\cdot a_{ij}.
\end{align*}
Note that $\{a_{01}\} \ \cup \ U  \subseteq \langle x \rangle$,  
$x\lambda a_{01} = \lambda x^{-1}a_{01}\in \lambda\cdot\langle x \rangle$ and 
$W=\lambda\cdot U \subseteq \lambda\cdot\langle x \rangle$.
Since $V(\mathcal{P})\setminus\{\infty, \infty'\} = {D_{4s-2}} = \{1,\lambda\}\cdot \langle x \rangle$, 
it follows that $\langle x \rangle = \{a_{01}\} \ \cup \ U$, that is, $U =\langle x \rangle\setminus\{a_{01}\}$. Also, 
\[\lambda\cdot \langle x \rangle = 
\{x\lambda a_{01}\} \ \cup \ W =
\{x\lambda a_{01}\} \ \cup \ \lambda\cdot U =
\{x\lambda a_{01}\} \ \cup \ (\lambda\cdot \langle x \rangle\setminus\{\lambda a_{01}\}).\]
Hence, $x\lambda a_{01} = \lambda a_{01}$, that is, $x=1$ which is a contradiction since $x$ is an element
of order $2s-1>1$. 
\end{proof}

\section{Intersecting CS$(v,2t)$ for $v=4t$ and $6t$.}
\label{intersecting2}
We say that a $2t$-cycle system $\mathcal{C}$ of $K_v-I$ 
is \emph{intersecting} if any two cycles of ${\cal C}$ intersect in at least one vertex.
In this section we prove the existence of an intersecting CS$(v,2t)$ when either $v=4t$, or $v=6t$ and $t$ is odd. In both cases, we construct an intersecting cycle system $\mathcal{C}$ of $K_v-I$ which is 
$2$-{\it pyramidal} under the cyclic group $\Z_{v-2}$. Each cycle is obtained from paths with a suitable
list of differences. We point out that we will use the same notation as the one for cycles to denote a path.
To avoid misunderstandings, we will always specify whether we are
dealing with a path or a cycle. 
Also, given a path on $t$ vertices (\emph{$t$-path}) $P=(a_1, a_2, \ldots,a_t)$ with  $a_i\in\Z_{v-2}$, its list of differences is the multiset $\Delta P = \{\pm(a_i-a_{i+1})\;|\; i=1, \ldots,t-1\}$. 

We point out that here and in Section 5 we will denote by $[a, b]$ the \emph{interval} of integers 
$\{a, a+1, a+2, \ldots, b\}$ for $a\leq b$,  Of course, if $a < b$, then $[b, a]$ will be the empty set.

We first deal with the existence of an intersecting $CS(4t,2t)$ and prove the
following preliminary result.

\begin{lem}\label{AB}
For $t \geq 3$, there exist a $t$-path $A=(a_1, a_2, \ldots, a_{t})$ and
a $(t-2)$-path $B=(b_1, b_2, \ldots, b_{t-2})$ with vertices in $\Z$
such that $a_t-a_1=\pm 1$, $b_1=2t-2$, and $\Delta A \ \cup \ \Delta B = \pm[2, 2t-3]$.
Moreover,
\begin{align*}
  \text{if $t=4s-1$,\;} &   
    V(A) = [0,4s-1]\setminus\{3s-1\}, V(B) = [1,2s-2]\cup[6s-2, 8s-4], \\
  \text{if $t=4s$,\;} & 
    V(A) = [0,4s]\setminus\{3s\}, V(B) = [1,2s-1]\cup[6s, 8s-2], \\    
  \text{if $t=4s+1$,\;} &   
    V(A) = [0,4s+2]\setminus\{3s, 3s+1\}, \\
  & V(B) = [1,2s-1]\ \cup\ \{4s-2\}\ \cup\ [6s+2, 8s], \\ 
  \text{if $t=4s+2$,\;} &   
    V(A) = [0,4s+3]\setminus\{3s+1, 3s+2\},\\
  & V(B) = [1,2s-1]\ \cup\ \{4s+3\}\ \cup\ [6s+3, 8s+2]. 
\end{align*}
\end{lem}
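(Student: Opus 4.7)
The plan is a direct constructive proof by case analysis on $t \bmod 4$. Since $A$ has $t-1$ edges and $B$ has $t-3$ edges, a first sanity check is edge-counting: $(t-1)+(t-3) = 2t-4 = |[2,2t-3]|$. Hence the condition $\Delta A \cup \Delta B = \pm[2,2t-3]$ is equivalent to saying the $2t-4$ unsigned edge-differences of $A$ and $B$ are pairwise distinct and exhaust $\{2,3,\ldots,2t-3\}$. So the goal is to write down, for each residue class of $t$ modulo $4$, explicit vertex sequences for $A$ and $B$, then tabulate the consecutive differences and check that together they realize $\pm[2,2t-3]$.

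The natural strategy is a graceful-type zigzag. For $A$, begin at one end of $V(A)$ and alternately jump to the current largest and smallest unused elements of $V(A)$; this produces a strictly decreasing sequence of absolute differences. By removing one or two carefully chosen interior vertices (namely $3s-1$ when $t=4s-1$, $3s$ when $t=4s$, $\{3s,3s+1\}$ when $t=4s+1$, and $\{3s+1,3s+2\}$ when $t=4s+2$), the zigzag skips exactly those differences that $B$ will then have to recover, and the endpoints $a_1, a_t$ are forced to be consecutive integers. For $B$, a second zigzag is performed on its prescribed vertex set, anchored at $b_1 = 2t-2$ (the unique maximum of $V(B)$) and alternating into the low block $[1,2s-2]$ and the high block $[6s-2,8s-4]$ (or the appropriate analogue in each case). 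Because the two blocks of $V(B)$ are well-separated, the differences along $B$ are automatically large, which is exactly what is needed to fill in the values that $A$ skipped around the removed middle vertices. The extra singleton vertex that appears in $V(B)$ when $t=4s+1$ (namely $4s-2$) or $t=4s+2$ (namely $4s+3$) is precisely what the parity forces in order to equalize the counts of odd versus even differences.

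The main obstacle is bookkeeping across the four sub-cases: having guessed the right explicit zigzag for each parity, one must verify that the $t-1$ differences of $A$ together with the $t-3$ differences of $B$ are pairwise distinct and coincide with $[2,2t-3]$. Once the correct sequences are on the page, each verification reduces to listing two finite sequences of consecutive differences and observing that they partition the target interval. The difficulty is therefore not in a single hard step but in simultaneously arranging the three constraints $a_t - a_1 = \pm 1$, $b_1 = 2t-2$, and the prescribed vertex sets so that the induced difference sets tile $[2,2t-3]$; the four parities must be handled separately because the position of the gap in $V(A)$ and the possible singleton insertion in $V(B)$ depend on $t \bmod 4$.
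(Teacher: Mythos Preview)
Your plan is essentially the paper's own approach: explicit zigzag constructions for $A$ and $B$, one case for each residue of $t$ modulo $4$, followed by a routine tabulation of consecutive differences to see that they partition $[2,2t-3]$. One small correction: the zigzag for $A$ cannot start ``at one end of $V(A)$'' if you want $a_t-a_1=\pm 1$; the paper instead starts $A$ near the middle (at $2s-1$, $2s$, $2s$, $2s+1$ in the four cases) and uses that initial step to supply the one difference the main back-and-forth would otherwise skip around the removed vertex --- so for $t=4s-1,4s$ the set $\Delta A$ is already a full interval and $B$ recovers nothing from it, while for $t=4s+1,4s+2$ a single value does move to $\Delta B$, which is exactly what your singleton in $V(B)$ accounts for.
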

\begin{proof} Let $t=4s+i\geq 3$ for $i\in\{-1,0,1,2\}$. For each value of $i$, we provide below
the required paths $A=A_i$ and $B=B_i$: 
\begin{align*}
A_{-1} = (&2s-1, 0,4s-1,1,4s-2, \ldots, i, 4s-1-i, \ldots, s-1, 3s, \\
& s,3s-2,s+1, 3s-3, \ldots, i, 4s-i-2, \ldots, 2s-2, 2s), \\
B_{-1} = (& 8s-4, 1, 8s-5, 2, \ldots, 8s-3-i, i, \ldots, 6s-1, 2s-2, 6s-2),\\
A_0 = (&2s, 0,4s,1,4s-1, \ldots, i, 4s-i, \ldots, s-1, 3s+1, \\
& s,3s-1,s+1, 3s-2, \ldots, i, 4s-i-1, \ldots, 2s-2, 2s+1, 2s-1), \\
B_0 = (& 8s-2, 1, 8s-3, 2, \ldots, 8s-1-i, i, \ldots, 6s, 2s-1),\\
A_1 = (&2s, 0,4s+2, 1,4s+1, \ldots, i, 4s+2-i, \ldots, s, 3s+2, \\
&s+1,3s-1,s+2, 3s-2, \ldots, i, 4s-i, \ldots, 2s-1, 2s+1), \\
B_1 = (&8s, 1, 8s-1, 2,\ldots, 8s+1-i,i, \ldots, 6s+2, 2s-1,4s-2),\\
A_2 = (&2s+1, 0,4s+3, 1,4s+2, \ldots, i, 4s+3-i, \ldots, s, 3s+3, \\
&s+1,3s,s+2, 3s-1, \ldots, i, 4s-i+1, \ldots, 2s-1, 2s+2, 2s), \\
B_2 = (&8s+2, 1, 8s+1, 2,\ldots, 8s+3-i,i, \ldots, 6s+4, 2s-1, 6s+3,4s+3).
\end{align*}
It is not difficult to check that\\

\noindent
\begin{tabular}{ll}
   $\Delta A_{-1}$ = $\pm[2,4s-1]$, & $\Delta B_{-1}$ = $\pm[4s,8s-5]$;\\  
   $\Delta A_0\;\;   $ = $\pm[2,4s]$,   & $\Delta B_0$\;\;    = $\pm[4s+1,8s-3]$;\\
   $\Delta A_1\;\;   $ = $\pm([2,4s+2]\setminus\{2s-1\})$, & $\Delta B_1$\;\;  = 
   $\pm([4s+3,8s-1]\cup\{2s-1\})$;\\
   $\Delta A_2\;\;   $ = $\pm([2,4s+3]\setminus\{2s\})$,   & $\Delta B_2$\;\;  = 
   $\pm([4s+4,8s+1]\cup\{2s\})$.
\end{tabular}\\

Therefore, $\Delta A \ \cup \ \Delta B = \pm[2, 2t-3]$. We leave the reader to check that $A$ and $B$ 
satisfy all the remaining properties.
\end{proof}

We are now able to prove the following result.

\begin{thm}\label{int1}
   There exists an intersecting $CS(4t,2t)$ for any $t\geq 3$.
\end{thm}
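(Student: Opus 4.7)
The plan is to exhibit an intersecting $CS(4t,2t)$ that is $2$-pyramidal over $G=\Z_{4t-2}$ by specifying a $(K_{\ol{G}},2t)$-starter system $\mathcal{S}=\{C_1,C_2\}$ built from the paths $A,B$ of Lemma~\ref{AB}. Let $\lambda=2t-1$ denote the unique involution of $G$. The two cycles are
\begin{align*}
C_2 &= (a_1,\, a_2,\, \ldots,\, a_t,\, a_1+\lambda,\, a_2+\lambda,\, \ldots,\, a_t+\lambda),\\
C_1 &= (\infty,\, \lambda,\, b_1,\, b_2,\, \ldots,\, b_{t-2},\, \infty',\, b_{t-2}+\lambda,\, \ldots,\, b_1+\lambda,\, 0).
\end{align*}
Each is a $2t$-cycle invariant under the translation by $\lambda$ (acting on $C_2$ as the half-turn $v_i\mapsto v_{i+t}$, and on $C_1$ as the reflection fixing $\infty$ and $\infty'$), so both are short-orbit cycles of orbit size $2t-1$, and $C_1$ is the unique cycle of $\mathcal{S}$ through $\infty$ (and through $\infty'$). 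The distinctness of the vertices of $C_1$ reduces to checking $\lambda\notin V(B)$ and $\lambda\notin V(B)-V(B)$, which one reads off from the explicit ranges in Lemma~\ref{AB}.

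The heart of the argument is to verify that $\mathcal{S}$ satisfies Definition~\ref{starter}. The two closing edges $\{a_t,a_1+\lambda\}$ and $\{a_t+\lambda,a_1\}$ of $C_2$ both yield $\pm(2t-2)$ as a difference, using $a_t-a_1=\pm1$, while the remaining inner edges contribute $\Delta A$. In $C_1$, the edges $\{\lambda,b_1\}$ and $\{b_1+\lambda,0\}$ both yield $\pm1$ because $b_1=2t-2$, while the remaining inner edges contribute $\Delta B$. Combining with $\Delta A\cup\Delta B=\pm[2,2t-3]$ from Lemma~\ref{AB},
\[
\Delta\mathcal{S}\;\supseteq\;\{\pm1\}\cup\Delta A\cup\Delta B\cup\{\pm(2t-2)\}\;=\;\pm[1,2t-2]\;=\;G\setminus\{0,\lambda\},
\]
and $s+2\ell=2=v/(2t)$, so all three conditions of Definition~\ref{starter} hold. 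By Lemma~\ref{2pyramidal}, $\mathcal{C}=Orb(C_1)\cup Orb(C_2)$ is a $2$-pyramidal $CS(4t,2t)$ over $G$.

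It remains to show that $\mathcal{C}$ is intersecting. Any two translates of $C_1$ share $\infty$ and $\infty'$. For any two translates of $C_2$, inclusion--exclusion gives $|V(C_2)\cap(V(C_2)+g)|\geq 2|V(C_2)|-|G|=2$ for every $g\in G$, so they intersect as well. The only subtle case is a translate of $C_1$ meeting a translate of $C_2$: by Lemma~\ref{X1-X2} this holds for all shifts if and only if $(V(C_1)\cap G)-V(C_2)=G$. Since $|V(C_1)\cap G|+|V(C_2)|=(2t-2)+2t$ equals $|G|$ exactly, pigeonhole is too weak, and this is the main obstacle of the proof: the equality has to be verified by hand from the explicit vertex sets of Lemma~\ref{AB}, most likely by a case analysis on the four residues $t\equiv -1,0,1,2\pmod 4$ in which those sets are defined.
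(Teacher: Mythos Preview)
Your plan is correct and essentially identical to the paper's proof: the two cycles you write down are exactly the paper's $C_1$ and $C_2$ (with the labels swapped and $b_0=\lambda$ made explicit), the difference computation is the same, and the intersecting argument is organized the same way. The one genuine improvement is your inclusion--exclusion observation $|V(C_2)\cap(V(C_2)+g)|\ge 2\cdot 2t-(4t-2)=2$, which disposes of the case of two translates of the $A$-cycle for free; the paper instead verifies $V(C_2)-V(C_2)\supseteq\Z_{4t-2}$ directly from the explicit vertex sets. For the mixed case $C_1$ versus $C_2$ you correctly identify that pigeonhole is tight and a case analysis on $t\bmod 4$ is needed; the paper does exactly this (spelling out only $t=4s$ and leaving the remaining three residues to the reader), so your plan matches the paper's level of detail here as well. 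One small omission in your distinctness check for $C_1$: besides $\lambda\notin V(B)$ and $\lambda\notin V(B)-V(B)$ you also need $0\notin V(B)$, and you should note the analogous condition $\lambda\notin V(A)-V(A)$ for $C_2$; both are immediate from the ranges in Lemma~\ref{AB}.
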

\begin{proof}  
  Consider a $t$-path $A=(a_1, a_2, \ldots, a_{t})$, with $a_t-a_1=\pm1$, and a  
  $(t-2)$-path $B=(b_1, b_2, \ldots,$ $b_{t-2})$, with $b_1 = 2t-2$, 
  as in Lemma \ref{AB}. Now, we construct two 
  $(2t)$-cycles $C_1$ and $C_2$ with vertices in $\Z_{4t-2} \ \cup \ \{\infty, \infty'\}$ as follows:
  \begin{align*}
    C_1 = (&a_1, a_2, \ldots, a_{t}, 
    a_1+2t-1, a_2+2t-1, \ldots, a_{t}+2t-1),\\
    C_2 = (&\infty, b_0, b_1, \ldots, b_{t-2}, \infty', 
     b_{t-2} + 2t-1, \ldots, b_1+2t-1, b_0+2t-1).
  \end{align*}
  where $b_0=2t-1$. By construction $C_i+2t-1=C_i$ for $i=1,2$. Also,
  \begin{align*}
  \Delta C_1  \supseteq &\; \Delta A 
  \ \cup \ \{\pm(a_t-a_1)+2t-1\} = \Delta A \ \cup \ \{\pm(2t-2)\} \\
  \Delta C_2  \supseteq &\; \Delta B \ \cup \ \{\pm (b_1-b_0)\} = 
  \Delta B \ \cup  \ \{\pm 1\}.
  \end{align*}
  By Lemma \ref{AB} we have that $\Delta A \ \cup \ \Delta B = \pm[2, 2t-3]$. Therefore, 
  $\Delta C_1 \ \cup \ \Delta C_2$ contains  
  $\pm[1,2t-2] = \Z_{4t-2}\setminus\{0,2t-1\}$. Note that $2t-1$ is the involution of 
  $\Z_{4t-2}$ which, by construction, fixes both $C_1$ and $C_2$. It follows that $\{C_1, C_2\}$
  is a $(K_{\ol{\Z_{4t-2}}}, 2t)$-starter system hence, by Lemma \ref{2pyramidal}, 
  ${\cal C} = Orb(C_1) \ \cup \ Orb(C_2)$ is a $2$-pyramidal $CS(4t,2t)$. 
  
  It is left to show that ${\cal C}$ is intersecting. We will prove the assertion only 
  for $t=4s$  and leave the easy check of the other cases to the reader. Setting
  $V_1 = V(C_1)$ and $V_2=V(C_2)\setminus\{\infty, \infty'\}$, we have by Lemma \ref{AB} that
  \begin{align*}
    V_1 =&\;  
    ([0,4s] \ \cup \ [8s-1,12s-1]) \setminus \{3s,11s-1\} \\ 
    V_2 =&\;   [0,2s-1] \ \cup \ [6s,10s-2] \ \cup \ [14s-1, 16s-3].
  \end{align*}
  It is not difficult to check that $\Z_{4t+2} \subseteq V_1 - V_i$ for any $i\in \{1,2\}$. Now, by taking into account
  Lemma \ref{X1-X2} and considering that $\infty, \infty' \in V(C_2)+x$ for any $x\in \Z_{4t+2}$, we conclude that any two cycles in ${\cal C}$ 
  share some vertices, that is, ${\cal C}$ is intersecting.   
\end{proof}

Finally, we construct an intersecting CS$(6t,2t)$ for any odd $t\geq 3$.

\begin{prop} \label{int2}
   There exists an intersecting $CS(6t,2t)$ for any odd $t\geq 3$.
\end{prop}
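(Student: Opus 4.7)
The plan is to follow the general strategy of Theorem \ref{int1}: construct a $2$-pyramidal intersecting $CS(6t, 2t)$ over $\Z_{6t-2}$, with involution $\lambda = 3t-1$, by exhibiting a $(K_{\ol{\Z_{6t-2}}}, 2t)$-starter system and then verifying the intersecting property via Lemma \ref{X1-X2}. A direct edge-count applied to Lemma \ref{2pyramidal} shows that such a starter must satisfy $s + 2\ell = 3$. The natural analog of the setup in Theorem \ref{int1} is $(s,\ell) = (3,0)$: one ``infinity'' short-orbit cycle
\[
C_0 = (\infty,\, b_0,\, b_1,\, \ldots,\, b_{t-2},\, \infty',\, b_{t-2}+\lambda,\, \ldots,\, b_0+\lambda)
\]
containing both $\infty$ and $\infty'$, and two ``regular'' short-orbit cycles
\[
C_k = (a_1^{(k)},\, \ldots,\, a_t^{(k)},\, a_1^{(k)}+\lambda,\, \ldots,\, a_t^{(k)}+\lambda), \qquad k=1,2,
\]
all fixed by $\lambda$. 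These contribute $t-2$, $t$, and $t$ distinct unordered differences respectively, summing to $3t-2 = |\Z_{6t-2}\setminus\{0,\lambda\}|/2$: exactly what is needed with no slack.

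The technical heart is an analog of Lemma \ref{AB}: produce three explicit integer paths whose edge differences, together with the ``bridging'' differences coming from each cycle's edge joining $a_t^{(k)}$ to $a_1^{(k)}+\lambda$ (for $C_1,C_2$) and from the edge $(b_0,b_1)$ of $C_0$, exactly partition $\pm[1, 3t-2]$. Natural choices such as $b_0=\lambda$ (so that $(b_0,b_1)$ supplies any single difference of one's choosing) and $a_t^{(k)}-a_1^{(k)}\in\{\pm 1,\pm 2\}$ produce bridging differences $\pm(3t-2)$ and $\pm(3t-3)$, reducing the task to a partition of the remaining interval $\pm[2,3t-4]$ into the internal differences of the three paths, of respective sizes $t-3$, $t-1$, $t-1$.

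To guarantee the intersecting property, one then invokes Lemma \ref{X1-X2}: translates of $C_0$ always share $\infty$, so only the pairwise difference sets $V(C_i)_{\mathrm{reg}}-V(C_j)_{\mathrm{reg}}$ for pairs $(i,j)$ with $i\neq 0$ or $j\neq 0$ need to cover all of $\Z_{6t-2}$. This places additional constraints on the actual placement (not just the spacings) of the path vertices.

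The main obstacle is the simultaneous satisfaction of both the exact edge-difference partition and the vertex-set difference coverage. As in Lemma \ref{AB}, I expect an explicit case analysis on $t\pmod 4$, reducing to only the two cases $t\equiv 1,3\pmod 4$ since $t$ is odd, but with noticeably more interleaving than in the two-path case of Lemma \ref{AB}. The smallest case $t=3$ may require special treatment: in $\Z_{16}$ a $\lambda$-fixed vertex set of size $6$ can never satisfy $V-V=\Z_{16}$ (its symmetric structure forces at least two missing differences), so for $t=3$ one likely switches to the alternative starter profile $(s,\ell)=(1,1)$, replacing $C_1,C_2$ by a single long-orbit $2t$-cycle whose $2t$ distinct vertices can be chosen to form a difference cover of $\Z_{6t-2}$.
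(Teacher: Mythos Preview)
Your plan follows the right template (a $2$-pyramidal $(K_{\overline{\Z_{6t-2}}},2t)$-starter system with $\lambda=3t-1$), and your count $s+2\ell=3$ is correct. However, the paper does \emph{not} use the profile $(s,\ell)=(3,0)$ that you propose for $t\geq5$; it uses $(s,\ell)=(1,1)$ for \emph{every} odd $t\geq3$: one short-orbit cycle $C_1$ carrying both $\infty$ and $\infty'$, together with a single long-orbit cycle $C_2$. (The case $t=3$ is handled by an ad~hoc pair in $\Z_{16}$; for $t=2s+1>3$, $C_1$ is built from the zig--zag path $(0,1,-1,\ldots,s-1,-(s-1),3s+5)$ and its $\lambda$-translate, and $C_2$ is one explicit long cycle whose list of differences fills in the remainder of $\pm[1,3t-2]$.)

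This choice simplifies matters in two ways. First, since every translate of $C_1$ contains $\infty$, the intersecting property reduces to only two coverage conditions, $V_2-V_2\supseteq\Z_{6t-2}$ and $V_2-V_1\supseteq\Z_{6t-2}$, rather than the five conditions your three-cycle scheme requires. Second, and more importantly, the long-orbit cycle $C_2$ carries no forced $\lambda$-symmetry, so its $2t$ vertices can be placed freely. By contrast each of your ``regular'' short-orbit cycles has $V_k=A_k\cup(A_k+\lambda)$, whence $V_k-V_k=(A_k-A_k)+\{0,\lambda\}$, and you need $A_k-A_k$ to meet every coset of $\langle\lambda\rangle$. You correctly observed that this is impossible at $t=3$; for larger $t$ it is not an outright obstruction, but it is a genuine extra constraint layered on top of the exact difference partition and the endpoint conditions $a_t^{(k)}-a_1^{(k)}\in\{\pm1,\pm2\}$ that you already list.

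So your outline is plausible but remains only a sketch: the three-path analogue of Lemma~\ref{AB} is asserted, not constructed, and that construction---simultaneously realising the difference partition of $\pm[2,3t-4]$ and the five vertex-difference covers---is where all the content lies. The paper sidesteps this by taking the $(1,1)$ profile from the start.
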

\begin{proof} We first deal with the case $t=3$ and consider the two $6$-cycles
 $C_1=(\infty, 0,2,\infty', 10, 8)$ and $C_2=(0,3,8,7,14,4)$ with vertices in $\Z_{16}$.
 Note that $\Delta C_1 \supseteq \{\pm 2\}$ and 
 $\Delta C_2 = \Z_{16}\setminus\{0,\pm2,8\}$; thus, 
 $\Delta C_1 \ \cup\ \Delta C_2 \supseteq \Z_{16}\setminus\{0,8\}$. Also, $C_1 + 8 = C_1$.   Therefore, 
 $\{C_1, C_2\}$ is a $(K_{\ol{\Z_{16}}}, 2t)$-starter system 
  and, by Lemma  \ref{2pyramidal}, we have that 
  ${\cal C} = Orb(C_1) \ \cup \ Orb(C_2)$ is a $2$-pyramidal $CS(18,6)$.
  We now show that ${\cal C}$ is intersecting. First note that all translates of $C_1$ contain 
  $\infty$ and $\infty'$. Now, set $V_1 = V(C_1)\setminus\{\infty, \infty'\}$ and $V_2 = V(C_2)$.
  It is not difficult to check that $V_i-V_2\supseteq \Z_{16}$ for $i=1,2$. Therefore, 
  in view of Lemma \ref{X1-X2}, we have that any two cycles of ${\cal C}$ intersect in at least one    vertex. 

We proceed similarly when $t>3$.
Let $t=2s+1$ with $s>1$, and let $A=(a_1, a_2, \ldots, a_{t-1})$ be the $(t-1)$-path defined by
$A=(0,1,-1,2,-2,\ldots, s-1,-(s-1), 3s+5)$. Now, we construct two 
  $(2t)$-cycles $C_1$ and $C_2$ with vertices in $\Z_{6t-2} \ \cup \ \{\infty, \infty'\}$ as follows:
  \begin{align*}
    C_1 = (& \infty, a_1, a_2, \ldots, a_{t-1}, \infty', 
           a_{t-1}+6s+2, \ldots, a_2+6s+2, a_{1}+6s+2),\\
    C_2 = (& 0, 10s+5, 1, 10s+4,\ldots, i, 10s+5-i, \ldots, s+1, 9s+4,\\
           &   s+2, 9s+1, s+3, 9s, \ldots, j, 10s+3-j, \ldots, 2s-1, 8s+4, 2s, 8s+1).
  \end{align*}
  It is not difficult to check that $\Delta C_1 \supseteq \Delta A = 
                   \pm [1,2s-2] \ \cup \ \{\pm (4s+4)\}$ and 
  $\Delta C_2 = \pm([2s-1, 4s+3] \ \cup \ [4s+5,6s+1])$;
  hence, $\Delta C_1 \ \cup \ \Delta C_2 \supseteq \pm[1,6s+1]= \Z_{6t-2}\setminus\{0, 3t-1 = 6s+2\}$.
  Note that $6s+2$ is the involution of $\Z_{6t-2}$ which fixes, by construction, $C_1$. 
  It is then straightforward to check that $\{C_1, C_2\}$ is a $(K_{\ol{\Z_{6t-2}}}, 2t)$-starter system 
  and, by Lemma   
  \ref{2pyramidal}, we have that 
  ${\cal C} = Orb(C_1) \ \cup \ Orb(C_2)$ is a $2$-pyramidal $CS(6t,2t)$. 
  
  It is left to show that ${\cal C}$ is intersecting.  For our convenience we set 
  $V_1 = V(C_1)\setminus\{\infty, \infty'\}$ and $V_2 = V(C_2)$. 
  Since $\infty$ and $\infty'$ lie in any  translate of $C_1$, in view of Lemma \ref{X1-X2}
  we only need to show that   $\Z_{6t-2} \subseteq V_2 - V_i$ for $i=1,2$. 
  We have that
  \begin{align*}
    &V_1 = (W \ \cup \ \{3s+5\}) +\{0,6s+2\}, \; \text{and},\\
    &V_2 = [0,2s] \ \cup \ \{8s+1\} \ \cup \ [8s+4,10s+5]\setminus\{9s+2,9s+3\}, 
  \end{align*}
   where $W= [-(s-1), s-1]$. It is not difficult to check that $\Z_{6t-2} \subseteq V_2-V_2$. 
   Now, note that $-V_1 = (W \ \cup \ \{-3s-5\}) +\{0,6s+2\}$,
   therefore,
   \[ \Z_{6t-2}\subseteq (V_2 + W) + \{0,6s+2\} \subseteq V_2 - V_1.
   \]
   Therefore, ${\cal C}$ is intersecting.
\end{proof}

\section{Intersecting $CS(K_{w,w}, 2t)$ for $w=2t$ and $4t$}
We denote by $K_{w,w}$ the complete bipartite graph with vertex-set $\Z_{w}\times \Z_2$ and
parts $X= \Z_{w}\times \{0\}$ and $Y=\Z_{w}\times \{1\}$. Also, we say that a
$2t$-cycle system $\mathcal{C}$ of $K_{w,w}$ 
is \emph{intersecting} if any two cycles of ${\cal C}$ intersect in at least one vertex of $X$, that is,
\begin{equation}\label{good}
V(C) \ \cap\ V(C') \ \cap\ X \neq \emptyset \quad\quad \text{for any}\quad  C,C' \in \mathcal{C}.
\end{equation}

In this section we prove the existence of an intersecting $CS(K_{w,w}, 2t)$ when either $w=2t$ and $t\geq4$ is even, or $(w,t)=(2t,3),(2t,5)$, or $w=4t$ and $t\geq 7$ is odd.

For the sake of brevity, we denote by $x_i$ the vertex $(x,i)\in \Z_{w}\times \Z_2$.

\begin{thm}\label{strong1}
   There exists an intersecting $CS(K_{2t,2t}, 2t)$  for any even $t\geq4$.
\end{thm}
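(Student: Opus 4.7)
My plan is to apply the semiregular construction of Lemma~\ref{semiregular} over $\Z_{2t}$ by exhibiting, for each even $t \geq 4$, a single $2t$-cycle $C$ in $K_{2t,2t}$ whose $\Z_{2t}$-orbit produces the desired intersecting cycle system. Since a $CS(K_{2t,2t},2t)$ consists of exactly $2t$ cycles, one starter cycle $C$ satisfying $\oDelta C = \Z_{2t}$ (with every element appearing exactly once) will automatically have full orbit of size $2t$ and thus yield the entire system.

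Writing $C = ((a_1)_0, (b_1)_1, (a_2)_0, (b_2)_1, \ldots, (a_t)_0, (b_t)_1)$ with $a_i, b_i \in \Z_{2t}$ and indices read cyclically modulo $t$, the $2t$ oriented differences of $C$ are $b_i - a_i$ and $b_i - a_{i+1}$ for $i = 1, \ldots, t$. I need to choose the $a_i, b_i$ so that (i) $\oDelta C = \Z_{2t}$, which by Definition~\ref{starter2} and Lemma~\ref{semiregular} guarantees that $Orb(C)$ is a $CS(K_{2t,2t},2t)$; and (ii) the $X$-vertex set $V_X = \{a_1, \ldots, a_t\}$ of $C$ satisfies $V_X - V_X \supseteq \Z_{2t}$, which by Lemma~\ref{X1-X2} forces any two translates $C+g_1, C+g_2$ to share a vertex in $X$, establishing the intersecting property~\eqref{good}.

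Writing $t = 2s$ with $s \geq 2$, I would first fix a $t$-subset $V_X \subseteq \Z_{4s}$ satisfying (ii); for example, one can directly verify that $V_X = \{0, 1, \ldots, s-2, s\} \cup \{2s, 2s+1, \ldots, 3s-1\}$ yields $V_X - V_X = \Z_{4s}$. Then I would specify the cyclic ordering of the $a_i$ around $C$ and assign each $b_i \in \Z_{2t}$ by an explicit zig-zag or interleaving rule designed to make the $2t$ oriented differences $b_i - a_i$ and $b_i - a_{i+1}$ exhaust $\Z_{2t}$ exactly once, in the spirit of the path constructions $A_i, B_i$ of Lemma~\ref{AB}. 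The exact formulae will likely depend on whether $t \equiv 0$ or $t \equiv 2 \pmod 4$.

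The principal obstacle is achieving (i) and (ii) simultaneously. Once $V_X$ is fixed so as to satisfy (ii), the requirement that $\oDelta C$ hit every element of $\Z_{2t}$ exactly once severely constrains both the cyclic ordering of the $a_i$'s and the values of the $b_i$'s; producing such an ordering by a uniform formula valid for all even $t \geq 4$ is delicate, and the resulting verification of the oriented-difference list and of the set-difference condition on $V_X$, though conceptually routine, is computationally heavy, mirroring the case analysis in Section~4.
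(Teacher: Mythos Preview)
Your strategy is exactly the paper's: build a semiregular $CS(K_{2t,2t},2t)$ from a single starter cycle $C$ with $\oDelta C=\Z_{2t}$ via Lemma~\ref{semiregular}, and then verify the intersecting property by checking $V_X-V_X\supseteq\Z_{2t}$ and invoking Lemma~\ref{X1-X2}. So on the level of approach there is nothing to add.

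The gap is that you have not actually produced the cycle. What you flag as the ``principal obstacle'' is precisely the entire content of the proof, and the paper simply writes $C$ down. With $t=2s$ it takes
\[
C=\bigl((4s{-}1)_1,1_0,(4s{-}2)_1,2_0,\ldots,(3s)_1,s_0,\ (s{-}1)_1,(3s)_0,(s{-}2)_1,(3s{+}1)_0,\ldots,0_1,(4s{-}1)_0\bigr),
\]
a single zig-zag that works uniformly for all $s\ge2$; no split into $t\equiv 0$ versus $t\equiv 2\pmod 4$ is needed. One checks directly that $\oDelta C=\Z_{2t}$, and the $X$-projection is $V_X=[1,s]\cup[3s,4s-1]$, for which $V_X-V_X=\Z_{4s}$ is immediate. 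Your proposed set $V_X=\{0,\ldots,s-2,s\}\cup[2s,3s-1]$ also satisfies $V_X-V_X=\Z_{4s}$, but you have not shown that a starter cycle with that particular $X$-trace exists, and the paper's choice makes the difference computation slightly cleaner. In short, the plan is right; what remains is to exhibit $C$, and the paper's explicit formula shows this is lighter than you anticipate.
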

\begin{proof} 
   Let $t=2s$ with $s \geq 2$, 
   and denote by $C$ the following $2t$-cycle:
   \begin{align*}
     C=  (&(4s-1)_1, 1_0, (4s-2)_1, 2_0, \ldots, (4s-i)_1, i_0, \ldots, (3s)_1, s_0 \\
          &(s-1)_1, (3s)_0, (s-2)_1, (3s+1)_0, \ldots, (s-j)_1, (3s-1 +j)_0, \ldots, \\
          & 1_1, (4s-2)_0, 0_1, (4s-1)_0).
   \end{align*}   
   We can easily check that $\oDelta C = \Z_{2t}$; therefore, $C$ is 
   a $(K_{2t,2t},2t)$-starter system and, by Lemma \ref{semiregular}, $\mathcal{C}= Orb(C)$ is a semiregular 
   $CS(K_{2t,2t}, 2t)$ over $\Z_{2t}$.
  
  It is left to show that $\mathcal{C}$ satisfies Property \eqref{good}.
  If we set $V = V(C) \ \cap \ X$, this is equivalent to showing that $(V+g) \ \cap \ (V+h) \neq \emptyset$ for
  any $h,g\in {\Z}_{2t}$. 
    Since $V = ([1,s] \ \cup \ [3s,4s-1])\times \{0\}$, it is not difficult to check that 
   ${\Z}_{2t}\subseteq V-V$ and, by Lemma \ref{X1-X2}, we obtain the assertion. 
\end{proof}


\begin{thm}\label{strong3}
   There exists an intersecting $CS(K_{6,6}, 6)$.
\end{thm}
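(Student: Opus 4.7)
The plan is to exhibit an explicit intersecting $CS(K_{6,6},6)$, since $(w,t)=(6,3)$ is the case of $w=2t$ with $t$ odd that Theorem~\ref{strong1} does not cover. Following Section~5, one would first try a single $\Z_6$-semiregular starter, but this is obstructed by parity: for any $6$-cycle $C$ in $K_{6,6}$ with $X$-vertices $\{a_1,a_2,a_3\}$ and $Y$-vertices $\{b_1,b_2,b_3\}$, the six oriented differences sum to $2(b_1+b_2+b_3-a_1-a_2-a_3)$, which is even modulo $6$; but $\sum_{d\in\Z_6}d\equiv 3\pmod{6}$ is odd. So no single starter can satisfy $\oDelta C=\Z_6$.

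Instead, I would use the index-$2$ subgroup $\Z_3=\{0,2,4\}\leq \Z_6$ acting on $\Z_6\times\Z_2$ by $(z,i)\mapsto(z+2,i)$, and take two starter cycles $C_1,C_2$ whose $\Z_3$-orbits have size $3$, so that $\mathcal{C}=\{C_i+2k : i\in\{1,2\},\ k\in\{0,1,2\}\}$ gives $6$ cycles in all. A candidate pair is
\[
C_1=(0_0,\,1_1,\,1_0,\,2_1,\,2_0,\,5_1),\qquad C_2=(0_0,\,4_1,\,1_0,\,5_1,\,3_0,\,2_1).
\]

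For the edge-decomposition check, each edge of $K_{6,6}$ is classified by the pair (oriented difference, parity of its $X$-coordinate); these yield $6\cdot 2=12$ classes, each of which is a single $\Z_3$-edge-orbit of size $3$. I would verify that the six edges of $C_1$ represent six distinct such classes and the six edges of $C_2$ represent the six remaining classes. This simultaneously ensures that each cycle orbit consists of $18$ distinct edges and that the two orbits jointly partition $E(K_{6,6})$. The intersecting property would then be checked directly by listing the $X$-triples
\[
V(C_1)\cap X=\{0,1,2\},\ \{2,3,4\},\ \{0,4,5\},\quad V(C_2)\cap X=\{0,1,3\},\ \{2,3,5\},\ \{1,4,5\}
\]
and verifying the $15$ pairwise intersections are non-empty (or equivalently, by applying Lemma~\ref{X1-X2} to the $X$-vertex sets within each $\Z_3$-orbit and across the two orbits).

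The main difficulty is locating a viable starter pair: beyond the parity constraint, one must avoid producing any two cycles with complementary $X$-triples (the only disjoint $3$-subsets of $\{0,\ldots,5\}$), while simultaneously exhausting the $12$ (difference, parity) classes of edges without repetition. This is a small finite search that succeeds with the pair displayed above; the remaining verification is routine.
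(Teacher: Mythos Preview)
Your construction is correct: the two starters $C_1,C_2$ under the $\Z_3$-action do yield six $6$-cycles that partition $E(K_{6,6})$ (your $(\text{difference},\text{parity})$ bookkeeping is valid, since adding $2$ preserves both invariants and each of the $12$ classes is a single $\Z_3$-edge-orbit of size~$3$), and the six $X$-triples you list pairwise intersect. Your parity obstruction to a $\Z_6$-semiregular starter is also correct.

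The paper's proof takes a slightly different, more ad hoc route: it simply writes down six explicit $6$-cycles with no group action and asks the reader to check that they decompose $K_{6,6}$ and that any two share an $X$-vertex. Interestingly, the paper's six cycles have exactly the same $X$-triples $\{0,1,2\},\{2,3,4\},\{0,4,5\},\{0,1,3\},\{2,3,5\},\{1,4,5\}$ as yours, but the $Y$-vertices are wired differently and the system is \emph{not} $\Z_3$-invariant. What your approach buys is a cleaner verification (two starters and an orbit argument, in the spirit of Lemma~\ref{semiregular}, rather than a raw check of $36$ edges) together with an explanation of why the method of Theorem~\ref{strong1} cannot be used verbatim; what the paper's approach buys is brevity, since no structural scaffolding needs to be introduced.
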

\begin{proof} 
   The following $6$-cycles all together provide a cycle system of $K_{6,6}$:
   \begin{align*}  
     C_0  = (0_0,0_1,1_0,1_1,2_0,2_1),  &\quad    C_3 = (0_0,3_1,3_0,4_1,1_0,5_1),\\
     C_1  = (2_0,0_1,3_0,1_1,4_0,3_1),  &\quad    C_4  = (2_0,4_1,5_0,2_1,3_0,5_1),\\  
     C_2  = (4_0,0_1,5_0,1_1,0_0,4_1),  &\quad    C_5  = (1_0,2_1,4_0,5_1,5_0,3_1).      
   \end{align*}
   It is not difficult to check that any two of these cycles  intersect in a vertex of the form $x_0$.
\end{proof} 

\begin{thm}\label{strong5}
   There exists an intersecting $CS(K_{10,10}, 10)$.
\end{thm}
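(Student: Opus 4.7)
The plan is to mimic the direct-construction approach of Theorem \ref{strong3} (the $K_{6,6}$ case) by exhibiting ten $10$-cycles in $K_{10,10}$ that partition $E(K_{10,10})$ and pairwise intersect in a vertex of $X$. Unlike the setting of Theorem \ref{strong1}, a single-starter semiregular construction over $\mathbb{Z}_{10}$ is unavailable, because of the following parity obstruction: the oriented differences of any $2t$-cycle sum to $2(\sum y_i - \sum x_i)$, which lies in $2\mathbb{Z}_{10}=\{0,2,4,6,8\}$, whereas $\overrightarrow{\Delta}C=\mathbb{Z}_{10}$ would force the sum to equal $0+1+\cdots+9=45\equiv 5\pmod{10}$. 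So any construction must use either multiple starters (over a smaller group) or an explicit ad hoc listing.

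To keep the work manageable, I would aim for a $\mathbb{Z}_5$-symmetric system, choosing two base $10$-cycles $C^{(0)}$ and $C^{(1)}$ whose orbits under the translation $(x,i)\mapsto(x+2,i)$ yield the ten cycles of $\mathcal{C}$. The $X$-supports are taken to be the ten shifts of $D=\{0,1,3,5,7\}$ in $\mathbb{Z}_{10}$: namely, $C^{(0)}$ has $X$-support $D$ and $C^{(1)}$ has $X$-support $D+1=\{1,2,4,6,8\}$. A quick check shows $D-D=\mathbb{Z}_{10}$, so by Lemma \ref{X1-X2} any two $X$-supports intersect, and the intersecting property of $\mathcal{C}$ follows immediately.

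The main combinatorial step is to specify the actual edges of $C^{(0)}$ and $C^{(1)}$ so that the twenty resulting edges, together with their $\mathbb{Z}_5$-translates, partition the $100$ edges of $K_{10,10}$. Equivalently, each oriented difference $d\in\mathbb{Z}_{10}$ must appear exactly twice among the edges of $C^{(0)}\cup C^{(1)}$, and the two edges contributing each $d$ must have $X$-coordinates in different cosets of $2\mathbb{Z}_{10}$ (otherwise the $\mathbb{Z}_5$-orbit would revisit the same edge). The total sum of oriented differences over the two cycles must then equal $2\cdot 45\equiv 0\pmod{10}$, which is consistent with each cycle-sum being even, so the parity obstruction no longer applies. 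This puzzle can be solved by inspection or by a very small search.

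Once the two base cycles are written down, verification that $\mathcal{C}=Orb(C^{(0)})\cup Orb(C^{(1)})$ is a $CS(K_{10,10},10)$ reduces to a routine edge count, and the intersecting property is already secured by the choice of $D$. The main obstacle is the explicit construction of $C^{(0)}$ and $C^{(1)}$; should the $\mathbb{Z}_5$-symmetric strategy fail to produce a workable pair, the fallback is to list ten specific $10$-cycles directly, exactly as done for $K_{6,6}$ in Theorem \ref{strong3}, and check the partition and pairwise intersection conditions by hand.
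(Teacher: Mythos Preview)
Your parity obstruction is correct and worth noting: the oriented differences of any $2t$-cycle in $K_{2t,2t}$ sum to an element of $2\Z_{2t}$, while $\sum_{d\in\Z_{10}}d\equiv 5\pmod{10}$, so no single $(K_{10,10},10)$-starter exists. The $\Z_5$-symmetric plan is sensible, and your use of Lemma~\ref{X1-X2} with $D=\{0,1,3,5,7\}$ does secure the intersecting property in advance.

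The gap is that you never actually produce $C^{(0)}$ and $C^{(1)}$. You describe the constraints correctly (each $d\in\Z_{10}$ must appear twice among the twenty edges, once with an even $X$-coordinate and once with an odd one) and assert the puzzle ``can be solved by inspection or by a very small search,'' but you do not carry this out; nor do you verify that your particular choice of $X$-supports---with only one even vertex in $C^{(0)}$ and only one odd vertex in $C^{(1)}$---is compatible with closing each walk into a genuine $10$-cycle on distinct vertices. Until the two base cycles are written down and checked, there is no proof.

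The paper does exactly what you list as your fallback: it exhibits ten explicit $10$-cycles and leaves the routine verification to the reader. (Its listing in fact carries some $\Z_5$-flavoured structure in the $X$-supports---two families of five, each shifted by~$2$---but the cycles themselves are not a single $\Z_5$-orbit, so the construction is ultimately ad hoc.) If you want to complete your version, either finish the search for $C^{(0)},C^{(1)}$ and display them, or adopt the paper's direct listing.
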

\begin{proof} 
   The following $10$-cycles all together provide a cycle system of $K_{10,10}$:
   \begin{align*}
     C_0  = (0_0,0_1, 1_0,1_1, 2_0,2_1, 3_0,3_1, 4_0,4_1),&\; C_5=(1_0,5_1, 2_0,6_1, 3_0,7_1, 4_0,    8_1,7_0,9_1),\\
     C_1  = (2_0,0_1, 3_0,1_1, 4_0,2_1, 5_0,3_1, 6_0,4_1),&\; C_6=(3_0,5_1, 4_0,6_1, 5_0,7_1, 6_0,    9_1,9_0,8_1),\\  
     C_2  = (4_0,0_1, 5_0,1_1, 6_0,2_1, 7_0,3_1, 8_0,9_1),&\; C_7=(5_0,5_1, 6_0,6_1, 7_0,7_1, 8_0,    4_1,1_0,8_1),\\     
     C_3  = (6_0,0_1, 7_0,1_1, 8_0,2_1, 9_0,3_1, 0_0,8_1),&\; C_8=(7_0,5_1, 8_0,6_1, 9_0,7_1, 0_0,    9_1,3_0,4_1),\\
     C_4  = (8_0,0_1, 9_0,1_1, 0_0,2_1, 1_0,3_1, 2_0,8_1),&\; C_9=(9_0,5_1, 0_0,6_1, 1_0,7_1, 2_0,    9_1,5_0,4_1).               
   \end{align*}
   It is not difficult to check that any two of these cycles  intersect in a vertex of the form $x_0$.
\end{proof} 

\begin{thm}\label{strong2}
   There exists an intersecting $CS(K_{4t, 4t}, 2t)$ for any odd $t>5$.
\end{thm}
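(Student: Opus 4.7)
The plan is to construct a $(K_{4t,4t},2t)$-starter system $\{C_1,C_2\}$ consisting of two $2t$-cycles in $K_{4t,4t}$ whose oriented differences partition $\Z_{4t}$; by Lemma \ref{semiregular} this yields a semiregular $CS(K_{4t,4t},2t)$ $\mathcal{C}=Orb(C_1)\ \cup\ Orb(C_2)$ over $\Z_{4t}$. Since $|\oDelta C_i|=2t$ for each cycle and $|\Z_{4t}|=4t$, the starter condition forces $\oDelta C_1$ and $\oDelta C_2$ to be a disjoint partition of $\Z_{4t}$ into two blocks of size $2t$. To then secure the intersecting property I would compute $V_i=V(C_i)\ \cap\ X$ (each a subset of $\Z_{4t}$ of size $2t$) and verify $V_i-V_j\supseteq\Z_{4t}$ for every $i,j\in\{1,2\}$; by Lemma \ref{X1-X2} this is equivalent to every translate of $V_i$ meeting every translate of $V_j$, which, since all cycles of $\mathcal{C}$ are translates of $C_1$ or $C_2$, is exactly the assertion that any two cycles of $\mathcal{C}$ share a vertex of $X$.

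Following the template of Theorem \ref{strong1}, I would define $C_1$ by a zigzag-type sequence similar to $((4t-1)_1,1_0,(4t-2)_1,2_0,\ldots)$ producing oriented differences that cover a contiguous block of $\Z_{4t}$ of size $2t$, and define $C_2$ by a complementary zigzag over the remaining vertices of $X$ and $Y$ to yield the other $2t$ residues. The vertex sets $V_1$ and $V_2$ would then each be expressible as a union of two (or three) intervals of $\Z_{4t}$, chosen so that a direct interval computation shows that $V_1-V_1$, $V_2-V_2$, and $V_1-V_2$ each cover $\Z_{4t}$. Balancing the oriented-difference bookkeeping with this interval spread likely forces a case split on $t\pmod 4$ (i.e., $t\equiv 1$ or $3\pmod 4$), analogous to the four cases in Lemma \ref{AB}.

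The main obstacle is reconciling three constraints simultaneously: (i) disjointness of $\oDelta C_1$ and $\oDelta C_2$ with union $\Z_{4t}$; (ii) $V_i-V_i\supseteq\Z_{4t}$ for $i=1,2$; and (iii) $V_1-V_2\supseteq\Z_{4t}$. The oriented-difference constraint imposes a local, zigzag-like structure on each cycle, while the covering constraints on the $V_i$'s require each to be spread across $\Z_{4t}$ rather than concentrated in a single interval. The hypothesis $t>5$ presumably arises because for small odd $t$, namely $t=3$ and $t=5$ (handled separately in Theorems \ref{strong3} and \ref{strong5}), there is not enough room in $\Z_{4t}$ to simultaneously maintain the zigzag structure needed to control $\oDelta$ and the positional spread needed to satisfy the intersecting inequalities.
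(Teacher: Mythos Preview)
Your high-level plan matches the paper's approach exactly: build a two-cycle $(K_{4t,4t},2t)$-starter system, invoke Lemma~\ref{semiregular}, then verify the intersecting property via Lemma~\ref{X1-X2} by checking $V_i-V_j\supseteq\Z_{4t}$.  However, the proposal contains an arithmetic slip and is missing the key structural idea that makes the verification go through.

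First, the slip: a $2t$-cycle in a bipartite graph alternates between the parts, so $V_i=V(C_i)\cap X$ has size $t$, not $2t$.  This matters, because with only $t$ elements in each $V_i$ you have at most $t^2$ ordered differences available to cover $\Z_{4t}$, and an unstructured ``union of two or three intervals'' will generally not suffice.

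Second, and more importantly, the paper does \emph{not} rely on a brute interval computation or a case split on $t\pmod 4$.  Writing $t=2s+1$, the paper designs each cycle so that its $X$-vertices contain the order-$4$ subgroup $H=\{0,2s+1,4s+2,6s+3\}$ of $\Z_{4t}$ together with a short interval $W_i$ of length $2s-3$ sitting in a single coset of $H$.  Then
\[
V_i-V_j \supseteq H\cup(W_i+H)\cup(-W_j+H),
\]
and since $W_i+H=[4,2s]+H$ and $-W_j+H=[1,2s-3]+H$, one gets $V_i-V_j\supseteq[0,2s]+H=\Z_{4t}$ in one line.  This subgroup trick is the missing ingredient in your sketch; without it, the covering constraints (ii) and (iii) are genuinely hard to satisfy simultaneously with the starter condition.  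There is no $t\pmod 4$ split; the only exceptional case is $s=3$ (i.e.\ $t=7$), where the interval $W_1$ degenerates and an ad hoc $C_1$ is supplied.
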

\begin{proof} 
   Let $t=2s+1$ with $s \geq 3$,
   and denote by $C_0$ and $C_1$ the following $2t$-cycles:
   \begin{align*}
     C_0 &= (0_0, 1_1, (2s+1)_0, (2s+3)_1, (4s+2)_0, (4s+5)_1, \\
         &(6s+3)_0, (6s+7)_1, 
         \ldots, (6s+3-i)_0, (6s+7+i)_1, \ldots, (4s+6)_0, (8s+4)_1);
   \end{align*}      
   for $s=3$ set $C_1=(0_0, 26_1, 7_0, 4_1, 12_0, 2_1, 14_0, 13_1, 20_0, 5_1, 21_0, 8_1, 25_0, 14_1)$ 
   and for $s\geq 4$ set
   \begin{align*}
     C_1 &= ((6s+3)_1, 0_0, (8s+1)_1, (2s+1)_0, (2s-1)_1, \\ 
           & (4s+2)_0, (4s+1)_1, (6s+3)_0, (6s-2)_1, \\
           & (8s+3)_0, (6s+6)_1, \ldots, (8s+3-i)_0, (6s+6+i)_1,\ldots, (7s+8)_0, (7s+1)_1, \\
           & (7s+7)_0, (5s)_1, (7s+6)_0, \\
           &(3s+1)_1, (7s+5)_0, \ldots, (3s+j)_1, (7s+6-j)_0, \ldots, (4s-1)_1, (6s+7)_0). 
   \end{align*}  
   We can easily check that 
   \begin{align*}
      \oDelta C_0 =\; &[0,4s-2]\ \cup\ [6s+4,6s+6], \; \text{and} \\
      \oDelta C_1 =\; &[4s-1,6s+3] \ \cup \ [6s+7, 8s+3].   
   \end{align*}   
   Hence, $\oDelta C_0 \ \cup \ \oDelta C_1 = \Z_{4t}$. 
   Therefore, $\{C_1,C_2\}$ is 
   a $(K_{4t,4t},2t)$-starter system and, by Lemma \ref{semiregular}, $\mathcal{C}= Orb(C_1)\ \cup \ Orb(C_2)$ 
   is a semiregular    $CS(K_{4t,4t}, 2t)$ over $\Z_{4t}$.

  It is left to show that $\mathcal{C}$ satisfies property \eqref{good}.
  We recall that $X=\Z_{4t}\times \{0\}$ and $\Z_{4t}\times \{1\}$ are the parts of
  $K_{4t,4t}$. 
  Let $V_i$ be the projection of $V(C_i) \ \cap \ X$ on the first coordinate, for $i=0,1$. 
  It is not difficult to see that $V_i = H \ \cup \ W_i$ where 
  \begin{itemize}
    \item $H=\{0,2s+1,4s+2,6s+3\}$ is the subgroup of ${\Z}_{4t}$ of order $4$,
    \item $W_0 = [4,2s]+4s+2$, and 
    $W_1 = 
    \begin{cases}
     \{12,20,25\} & \text{for $s=3$}, \\
     [4,2s]+6s+3 & \text{for $s \geq4$}. 
    \end{cases}$
  \end{itemize}
  Also, for $i,j\in\{1,2\}$ we have that
  \[
    V_i-V_j \supseteq H \ \cup \ (H-W_j) \ \cup \ (W_i-H) = H \ \cup \ (-W_j+H) \ \cup \ (W_i+H).
  \]
  Since $W_i + H = [4,2s] + H$ and $-W_j + H = [1,2s-3] + H$, we then have that 
  \[
    V_i-V_j \supseteq [0,2s] + H = \Z_{4t},\quad \text{for any $i,j\in \{1,2\}$}.
  \]
  Therefore, by Lemma \ref{X1-X2}, $(V_i + a) \ \cap\ (V_j + b)$ is non-empty for any $i,j\in \{1,2\}$ and for any $a,b \in \Z_{4t}$
  hence, property \eqref{good} is satisfied.
\end{proof}

\section{Proof of Theorem \ref{main}}
This section is devoted to the proof of Theorem \ref{main}.
We start with Construction \ref{constr}, which yields a $CS(u + w, 2t)$,
and will enable the recursive construction of unparalleled cycle systems.

\begin{constr}\label{constr}
  Let $t,u$ and $w$ be positive integers with $u,w\equiv 0 \pmod{2t}$. Also, let
  $\mathcal{U}$ be a $CS(u,2t)$ with vertex-set $U$ and let
  $\mathcal{W}$ be a $CS(w,2t)$ with vertex-set $W$.
  Now, take a partition $\{X_1, X_2, \ldots, X_{\alpha}\}$ of $U$ 
  and a partition $\{Y_1, Y_2, \ldots,Y_{\beta}\}$ 
  of $W$ into subsets whose size is a multiple of $2t$. 
  Finally, choose a $2t$-cycle system $\mathcal{C}_{i,j}$ of $K_{X_i, Y_j}$ 
  for any $i,j$.
  Then, the set $\mathcal{F} = \mathcal{U} \ \cup \ \mathcal{W} \ \cup \ \bigcup_{i,j} \mathcal{C}_{i,j}$
  is a $CS(u+w,2t)$.
\end{constr} 
\begin{figure}[h]
    \centering
    \includegraphics[width=1.0\textwidth]{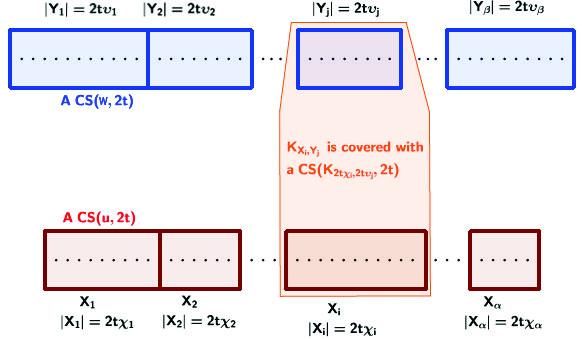}
  \end{figure}

To show the effectiveness of Construction \ref{constr}, we only need to check that any edge of $K_{U,W}$ is contained in a cycle of some $\mathcal{C}_{i,j}$. In fact,
since the edge-sets covered by the $\mathcal{C}_{i,j}$s are disjoint, it follows that there is exactly one cycle containing such an edge.
For any given pair $(x,y)\in U\times W$ we have that $x \in X_i$ and $y\in Y_j$ for suitable $i$ and $j$.
Note now that $\mathcal{C}_{i,j}$ is a cycle system of $K_{X_i, Y_j}$. 
Therefore,  the edge $\{x,y\}$ is contained in a cycle of 
$\mathcal{C}_{i,j}$ and this is enough to show that $\mathcal{F}$ is a $CS(u+w,2t)$.

Our main result will follow straightforwardly by the following three theorems.

\begin{thm}\label{main1}
If there exists an intersecting $CS(K_{2t,2t}, 2t)$, then
there is an unparalleled $CS(v,2t)$ for any $v \equiv 0 \pmod{2t}$ with $v\geq 4t\geq 12$.
\end{thm}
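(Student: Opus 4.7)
The plan is to prove Theorem~\ref{main1} by induction on $v$ in steps of $2t$, starting from $v=4t$. For the base case, Theorem~\ref{int1} provides an intersecting $CS(4t,2t)$, and this is automatically unparalleled: a hypothetical parallel class would consist of $v/(2t)=2$ vertex-disjoint cycles, which the intersecting property forbids.

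For the inductive step, given an unparalleled $CS(v,2t)$ $\mathcal{U}$ on a vertex set $U$, I would build an unparalleled $CS(v+2t,2t)$ via Construction~\ref{constr} with $u=v$ and $w=2t$: take $\mathcal{U}$ as above together with any $CS(2t,2t)$ $\mathcal{W}$ on a disjoint set $W$, partition $U$ into $m=v/(2t)$ parts $X_1,\dots,X_m$ each of size $2t$, and take $\{Y_1=W\}$ as the trivial partition of $W$. Let $\tilde{\mathcal{C}}$ be the intersecting $CS(K_{A,B},2t)$ supplied by hypothesis, intersecting in $A$. The key design choice is to fix a \emph{single} bijection $\phi\colon Y_1\to A$ to be used for every $i$, together with arbitrary bijections $\psi_i\colon X_i\to B$, and to define each $\mathcal{C}_{i,1}$ as the isomorphic copy of $\tilde{\mathcal{C}}$ obtained by sending $A$ to $Y_1$ via $\phi^{-1}$ and $B$ to $X_i$ via $\psi_i^{-1}$.

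The argument I plan to use relies on the fact that, with this choice, \emph{any} two distinct cycles of $\bigcup_{i=1}^m\mathcal{C}_{i,1}$ share a vertex of $Y_1$. Each such cycle corresponds to a pair $(i,\tilde C)$ with $\tilde C\in\tilde{\mathcal{C}}$: if two cycles come from the same $\tilde C$ but sit in different $\mathcal{C}_{i,1}$'s, their $Y_1$-parts both equal $\phi^{-1}(V(\tilde C)\cap A)$ and hence coincide entirely; if they correspond to distinct $\tilde C_1,\tilde C_2$, the intersecting property of $\tilde{\mathcal{C}}$ gives a shared vertex $a\in A$, which pulls back to the common vertex $\phi^{-1}(a)\in Y_1$. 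Consequently, in any parallel class $\mathcal{P}$ of $\mathcal{F}=\mathcal{U}\cup\mathcal{W}\cup\bigcup_i\mathcal{C}_{i,1}$, the number $B:=|\mathcal{P}\cap\bigcup_i\mathcal{C}_{i,1}|$ is at most $1$. Moreover $c:=|\mathcal{P}\cap\mathcal{W}|\le 1$, because any two cycles of $\mathcal{W}$ are $2t$-cycles on the $2t$-element set $W$ and so have identical vertex sets.

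To finish, I would count vertices in $\mathcal{P}$: on $W$ one has $2c+B=2$, and on $U$ one has $2a+B=2m$, where $a:=|\mathcal{P}\cap\mathcal{U}|$. With $c,B\in\{0,1\}$ the unique solution is $c=1$, $B=0$, which forces $a=m$; but then $\mathcal{P}\cap\mathcal{U}$ is a parallel class of $\mathcal{U}$, contradicting the inductive hypothesis. I expect the main obstacle to lie precisely in noticing that taking unrelated isomorphic copies gives only the weaker bound $|\mathcal{P}\cap\mathcal{C}_{i,1}|\le 1$ for each single $i$ (which still permits $B=2$); the sharpening to $B\le 1$ is obtained by using the \emph{same} bijection $\phi$ on the intersecting side across all $i$, so that every $\mathcal{C}_{i,1}$ shares a common labelling of $Y_1$.
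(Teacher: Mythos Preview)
Your proof is correct and essentially matches the paper's approach: both add a single new $2t$-block and glue it to the inductive piece via isomorphic copies of the intersecting bipartite system that all agree on the labelling of the new $2t$ vertices as the ``intersecting side'', so that any two bipartite cycles in the result meet there. The only cosmetic differences are that you name the large inductive system $\mathcal{U}$ and the new $2t$-system $\mathcal{W}$ while the paper swaps these roles, and you finish with a vertex count whereas the paper does a direct case split on how $X$ is covered.
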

\begin{proof} Let $v = 2tq$ and  proceed by induction on $q$. If $q=2$, then the assertion follows by Theorem \ref{int1}. 
If $q > 2$, let $\mathcal{W}$ be an unparalleled $CS(v-2t,2t)$ which exists  by the induction hypothesis. 
We denote by $W$ the vertex-set of $\mathcal{W}$ and by $\{Y_1, Y_2, \ldots,Y_{q-1}\}$  a partition of $W$ into $2t$-sets. 

Let $\mathcal{U}$ be a $CS(2t,2t)$ and denote by $X$ its  vertex-set.
By assumption, there exists an intersecting $2t$-cycle system $\mathcal{C}^*$ of $K_{X,Y}$ whenever 
$|Y|=2t$.
For any $j\geq 1$ we denote by $\mathcal{C}_j$ a copy  of $\mathcal{C}^*$ that covers the edges of 
$K_{X, Y_j}$; more precisely,
let $f_j$ be a bijection 
between $X \ \cup\ Y$ and $X \ \cup \ Y_j$ fixing $X$ pointwise, and set $\mathcal{C}_j = f_j(\mathcal{C^*})$.
By Construction \ref{constr}, we have that 
$\mathcal{F} = \mathcal{U} \ \cup \ \mathcal{W} \ \cup \ \bigcup_{j} \mathcal{C}_{j}$
 is a $CS(v, 2t)$. 
 
 We claim that $\mathcal{F}$ is parallel class free. Assume for a contradiction that $\mathcal{F}$ has a 
 parallel class $\mathcal{P}$. Note that the vertices of $X$ can be covered either by a unique cycle of 
 $\mathcal{U}$ or by two cycles in $\bigcup_{j} \mathcal{C}_{j}$.  If $\mathcal{P}$ contains a cycle 
 $C\in \mathcal{U}$, then
 $\mathcal{P}\setminus\{C\}$ is a parallel class of $\mathcal{W}$, contradicting the fact that $\mathcal{W}$ 
 has no parallel class. Then $\mathcal{P}$ contains two cycles $C_1, C_2$ of $\bigcup_{j} \mathcal{C}_{j}$. 
 By construction, $C_i=f_{j_i}(C_i^*)$ for some $C_i^* \in \mathcal{C}^*$ and for some integer $j_i$, with  
 $i=1,2$.
 Since the $f_j$'s fix the vertices in $X$, then $V(C_i)\ \cap\ X = V(C_i^*) \ \cap \ X$ for $i=1,2$. 
 By taking into account that the cycles of $\mathcal{C}^*$ satisfy condition \eqref{good}, we have that 
\[
  V(C_1) \ \cap\ V(C_2) \ \cap\ X = V(C_1^*) \ \cap\ V(C_2^*) \ \cap\ X \neq \emptyset.
\]
 This means that $C_1$ and $C_2$ share vertices contradicting the fact that $P$ is a parallel class.
 We can then conclude that $\mathcal{F}$ is unparalleled.
\end{proof}

\begin{thm}\label{main2}
There exists an unparalleled $CS(v,2t)$ for any odd $t> 5$ and for any $v \equiv 0 \pmod{4t}$.
\end{thm}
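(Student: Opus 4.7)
The plan is to mirror the recursive argument of Theorem \ref{main1}, but with blocks of size $4t$ in place of size $2t$. Writing $v=4tq$, I would induct on $q$. The base case $q=1$ is supplied directly by Theorem \ref{int1}: an intersecting $CS(4t,2t)$ cannot admit a parallel class, since any two of its cycles share a vertex.

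For the inductive step, I would take an unparalleled $\mathcal{W}=CS(v-4t,2t)$ on vertex-set $W$ (by induction), partition $W$ into $4t$-subsets $Y_1,\dots,Y_{q-1}$, place an intersecting $\mathcal{U}=CS(4t,2t)$ (Theorem \ref{int1}) on a disjoint $4t$-set $X$, and then---crucially using $t>5$ odd---fill each $K_{X,Y_j}$ with an isomorphic copy $\mathcal{C}_j=f_j(\mathcal{C}^*)$ of the intersecting $CS(K_{4t,4t},2t)$ provided by Theorem \ref{strong2}, via a bijection $f_j$ that fixes $X$ pointwise. Construction \ref{constr} then assembles $\mathcal{F}=\mathcal{U}\cup \mathcal{W}\cup\bigcup_j \mathcal{C}_j$ into a $CS(v,2t)$, in direct analogy with the proof of Theorem \ref{main1}.

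The main obstacle is showing that $\mathcal{F}$ has no parallel class, and the case analysis is slightly richer than before because $|X|=4t$. A cycle of $\mathcal{U}$ covers $2t$ vertices of $X$, a cycle of $\bigcup_j \mathcal{C}_j$ covers exactly $t$ (being a $2t$-cycle in a balanced bipartite graph), and a cycle of $\mathcal{W}$ covers none. If $a$ and $b$ are the numbers of each type appearing in a putative parallel class $\mathcal{P}$, then $2a+b=4$, giving the three possibilities $(a,b)\in\{(2,0),(1,2),(0,4)\}$. The first contradicts the intersecting property of $\mathcal{U}$ immediately, since its two cycles would have to be vertex-disjoint. The other two reduce to the pivotal claim: any two cycles $C_1,C_2\in \bigcup_j \mathcal{C}_j$ share a vertex of $X$. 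Here the choice of fix-$X$-pointwise isomorphisms pays off: writing $C_i=f_{j_i}(C_i^*)$ one gets $V(C_1)\cap V(C_2)\cap X=V(C_1^*)\cap V(C_2^*)\cap X$, which is non-empty by the intersecting property of $\mathcal{C}^*$ (and trivially non-empty if $C_1^*=C_2^*$, in which case $j_1\neq j_2$ is forced so that $C_1\neq C_2$). Once this cross-copy intersection is in hand, each of the remaining two subcases produces a pair of non-disjoint cycles in $\mathcal{P}$, contradicting that $\mathcal{P}$ is a parallel class, and the induction closes.
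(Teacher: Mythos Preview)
Your proposal is correct and follows essentially the same approach as the paper's own proof: both induct on $q$ with base case from Theorem~\ref{int1}, build $\mathcal{F}$ via Construction~\ref{constr} using an intersecting $CS(4t,2t)$ on $X$ and copies of the intersecting $CS(K_{4t,4t},2t)$ from Theorem~\ref{strong2} with bijections fixing $X$, and then rule out a parallel class by the same three-case analysis on how $X$ is covered. Your explicit bookkeeping via $2a+b=4$ is just a slightly more formal rendering of the paper's enumeration of the three covering scenarios.
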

\begin{proof} Let $v = 4tq$ and  proceed by induction on $q$. If $q=1$, then the assertion follows by  Theorem \ref{int1}. 
If $q > 1$, let $\mathcal{W}$ be an unparalleled $CS(v-4t,2t)$ which exists  by the induction hypothesis. 
We denote by $W$ the vertex-set of $\mathcal{W}$ and by $\{Y_1, Y_2, \ldots,Y_{q-1}\}$  a partition of $W$ into $4t$-sets. 

By Theorem \ref{int1}, there exists an intersecting $CS(4t,2t)$, say $\mathcal{U}$; 
we denote its  vertex-set by $X$. Also, by Theorem \ref{strong2}, there exists an intersecting $2t$-cycle system $\mathcal{C}^*$ of $K_{X,Y}$ whenever $|Y|=4t$.
For any $j\geq 1$ we denote by $\mathcal{C}_j$ a copy  of $\mathcal{C}^*$ that covers the edges of 
$K_{X, Y_j}$; more precisely,
let $f_j$ be a bijection 
between $X \ \cup\ Y$ and $X \ \cup \ Y_j$ fixing $X$ pointwise, and set $\mathcal{C}_j = f_j(\mathcal{C^*})$.
By Construction \ref{constr}, we have that 
$\mathcal{F} = \mathcal{U} \ \cup \ \mathcal{W} \ \cup \ \bigcup_{j} \mathcal{C}_{j}$
 is a $CS(v, 2t)$. 
 
 We claim that $\mathcal{F}$ is unparalleled. Assume by contradiction that $\mathcal{F}$ has a 
 parallel class $\mathcal{P}$. Note that the vertices of $X$ can be covered in $\mathcal{P}$ 
 by two cycles of $\mathcal{U}$, or 
 by four cycles in $\bigcup_{j} \mathcal{C}_{j}$, or 
 by one cycle of $\mathcal{U}$ and two cycles in $\bigcup_{j} \mathcal{C}_{j}$.  Since any two cycles of $\mathcal{U}$ intersect in some vertex, it then follows that
 $\mathcal{P}$ contains at least two cycles $C_1$ and $C_2$ in $\bigcup_{j} \mathcal{C}_{j}$. However,  
 as in the proof of Theorem \ref{main1}, it is not difficult to show that
 $C_1$ and $C_2$ share vertices contradicting the fact that $\mathcal{P}$ is a parallel class.
 The assertion is therefore proven.
\end{proof}

\begin{thm}\label{main3}
There exists an unparalleled $CS(v,2t)$ for any odd $t>5$ and 
for any $v \equiv 2t \pmod{4t}$ with $v\geq 6t$.
\end{thm}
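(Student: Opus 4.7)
The plan is to mirror the inductive construction used in Theorem \ref{main2}, with a single $6t$-block of intersecting type replacing the $4t$-block. The base case $v=6t$ is handled directly by Proposition \ref{int2}, which provides an intersecting (and hence unparalleled) $CS(6t,2t)$ for every odd $t\geq 3$, in particular for $t>5$.

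For $v>6t$, I would write $v=6t+4tq$ with $q\geq 1$ and apply Construction \ref{constr} as follows. Take $\mathcal{U}$ to be the intersecting $CS(6t,2t)$ from Proposition \ref{int2} on a vertex set $X$ of size $6t$, and take $\mathcal{W}$ to be an unparalleled $CS(4tq,2t)$ from Theorem \ref{main2} on a disjoint vertex set $W$ (this is legitimate because $4tq$ is a positive multiple of $4t$). Partition $X$ arbitrarily as $X_1\cup X_2$ with $|X_1|=4t$ and $|X_2|=2t$, and partition $W$ into $q$ blocks $Y_1,\ldots,Y_q$ of size $4t$. For each $j$, let $\mathcal{C}_{1,j}$ be a copy of the intersecting $CS(K_{4t,4t},2t)$ of Theorem \ref{strong2}, transported onto $K_{X_1,Y_j}$ by a bijection $f_j$ fixing $X_1$ pointwise (exactly as in the proof of Theorem \ref{main1}); and let $\mathcal{C}_{2,j}$ be any $CS(K_{2t,4t},2t)$, whose existence is guaranteed by \cite{So81}. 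Construction \ref{constr} then yields a $CS(v,2t)$, call it $\mathcal{F}$.

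The crux is to rule out parallel classes in $\mathcal{F}$. I would suppose that $\mathcal{P}$ is a parallel class and count how $X_1$ gets covered. Let $k$ be the number of cycles of $\mathcal{P}$ lying in $\mathcal{U}$, and $M$ the number of cycles of $\mathcal{P}$ lying in $\bigcup_{j}\mathcal{C}_{1,j}$. Because $\mathcal{U}$ is intersecting, $k\leq 1$; and because the bijections $f_j$ fix $X_1$ pointwise while the model system $\mathcal{C}^*$ from Theorem \ref{strong2} is intersecting in its first part, any two cycles drawn from $\bigcup_{j}\mathcal{C}_{1,j}$ share a vertex of $X_1$, whence $M\leq 1$. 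Cycles of $\mathcal{W}$ and of $\bigcup_{j}\mathcal{C}_{2,j}$ contribute nothing to $X_1$; an $\mathcal{U}$-cycle contributes at most $2t$ vertices to $X_1$, and each $\mathcal{C}_{1,j}$-cycle contributes exactly $t$. Hence the total $X_1$-coverage by $\mathcal{P}$ is at most $2tk+tM\leq 3t$, strictly less than $|X_1|=4t$, which contradicts the fact that $\mathcal{P}$ partitions $X_1$.

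The only non-routine point is verifying that the intersecting property of $\mathcal{C}^*$ transfers across pairs of cycles coming from different $\mathcal{C}_{1,j}$'s, but this is the familiar bookkeeping of Theorems \ref{main1} and \ref{main2}: fixing $X_1$ pointwise under every $f_j$ forces the common $X_1$-vertex of two model cycles to persist. It is worth emphasising that the argument needs neither an intersecting bipartite system between $X_2$ and the $Y_j$'s (none is constructed in the paper for odd $t>5$) nor the unparalleledness of $\mathcal{W}$; the asymmetric split $|X_1|=4t$, $|X_2|=2t$, together with the two available intersecting ingredients, is already enough to force a pigeonhole contradiction on $X_1$.
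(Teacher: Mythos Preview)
Your proof is correct and follows essentially the same route as the paper: the same $6t/4t$ split of $U$, the same ingredients from Proposition~\ref{int2}, Theorem~\ref{strong2}, and \cite{So81}, and the same contradiction on $X_1$ (the paper phrases it as ``at least two cycles of $\bigcup_j\mathcal{C}_{1,j}$ must appear and collide,'' while you phrase it as a coverage bound $2tk+tM\le 3t<4t$, but these are equivalent). Your remark that the unparalleledness of $\mathcal{W}$ is never used is correct and slightly sharpens the paper's presentation, which invokes Theorem~\ref{main2} but does not exploit that property in the contradiction.
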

\begin{proof} 
Let $v = 4tq+2t$. If $q=1$, then the assertion follows by  
Proposition \ref{int2}. 

If $q > 1$, then $v-6t\equiv 0\pmod{4t}$ and by Theorem \ref{main2} there is an unparalleled $CS(v-6t,2t)$,
say $\mathcal{W}$. 
We denote by $W$ the vertex-set of $\mathcal{W}$ and by $\{Y_1, Y_2, \ldots,Y_{q-1}\}$  a partition of $W$ into $4t$-sets. Also, by 
Proposition \ref{int2}, there exists an intersecting $CS(6t,2t)$, say $\mathcal{U}$; 
we denote its  vertex-set by $U$ and partition it into subsets $X_1$ and $X_2$ of size $4t$ and $2t$,
respectively. 
By Theorem \ref{strong2}, there exists an intersecting 
$2t$-cycle system $\mathcal{C}^*$ of $K_{X_1,Y}$ whenever $|Y|=4t$.
For any $j\geq 1$ we denote by $\mathcal{C}_{1,j}$ a copy  of $\mathcal{C}^*$ that covers the edges of 
$K_{X_1, Y_j}$; more precisely,
let $f_j$ be a bijection 
between $X \ \cup\ Y$ and $X \ \cup \ Y_j$ fixing $X$ pointwise, 
and set $\mathcal{C}_{1,j} = f_j(\mathcal{C^*})$.
Finally, let $\mathcal{C}_{2,j}$ be a $2t$-cycle system of $K_{X_2,Y_j}$ for any $j$ (see \cite{So81}).
By Construction \ref{constr}, we have that 
$\mathcal{F} = \mathcal{U} \ \cup \ \mathcal{W} \ \cup \ \bigcup_{i,j} \mathcal{C}_{i,j}$
 is a $CS(v, 2t)$. 
 
 We claim that $\mathcal{F}$ is unparalleled. Assume by contradiction that $\mathcal{F}$ has a 
 parallel class $\mathcal{P}$. Since $\mathcal{U}$ is intersecting, any two cycles of $\mathcal{U}$ share some  
 vertex;
 therefore, $\mathcal{P}$ contains at most one cycle $C_0$ of $\mathcal{U}$. Since $|X_1|=4t$, 
 the vertices of $X_1$ not lying in $C_0$ are covered by at least two cycles $C_1$ and $C_2$ in 
 $\bigcup_j \mathcal{C}_{1,j}$.
 As in the proof of Theorem \ref{main1}, it not difficult to show that
 $C_1$ and $C_2$ share vertices contradicting the fact that $\mathcal{P}$ is a parallel class.
\end{proof}

We are now able to prove the main theorem of this paper.\\

\noindent
{\bf Theorem \ref{main}.}
\emph{There exists an unparalleled $CS(v,2t)$ with $v \equiv 0 \pmod{2t}$ if and only if $v>2t>2$. }

\begin{proof} The case $t=2$ is proven in Theorem \ref{four}.
For $t=3,5$ or $t>2$ even, the result follows from Theorems \ref{strong1}, \ref{strong3}, \ref{strong5}, and \ref{main1}.
For an odd $t>5$ and $v\equiv 0 \pmod{4t}$, the assertion is proven in
Theorem \ref{main2}.
For an odd $t> 5$ and $v\equiv 2t\pmod{4t}$, the result is proven in
Theorem \ref{main3}.
\end{proof}

\section{Conclusions}
It appears as though most designs are non-resolvable but nonetheless have parallel classes,
whereas unparalleled designs are rarer and more difficult to construct.
There has been work showing that for any admissible order 
there exists a Steiner triple system which is not resolvable \cite{LiRees05}. This begs the question of whether there exists such designs which contain no parallel classes. 
This has been considered in \cite{BrHo15} but the problem still remains open.

In this paper, we have completely solved the case of even cycle systems of $K_v-I$. Results for cycle systems whose cycles are of odd order remain elusive.
Of course, these kind of questions can be considered for many other kinds of decompositions.
 The projective planes and, more generally, symmetric designs are known examples of intersecting systems. However, the problem in the case of non-symmetric BIBDs appears to be intractable.

It is classically known that there exist Latin squares which contain no transversal
\cite{DeKe74}. More generally, work has been done on bachelor and monogamous Latin squares
which can be seen as a kind of non-resolvable designs \cite{DaWaWe11, Ev06, WaWe06}.
Of course, there are many other decompositions into graphs such as paths or small trees 
which may be amenable to the methods of this paper.

\end{document}